\DeclareMathOperator{\Aut}{Aut}
\DeclareMathOperator{\End}{End}
\DeclareMathOperator{\Char}{char}
\DeclareMathOperator{\Det}{det}
\DeclareMathOperator{\Tr}{tr}
\DeclareMathOperator{\Deg}{deg}
\DeclareMathOperator{\Sp}{Sp}
\DeclareMathOperator{\Ht}{ht}
\DeclareMathOperator{\J}{J}
\DeclareMathOperator{\GL}{GL}
\newcommand{\TAut}{\operatorname{TAut}}
\newtheorem{thm}{Theorem}[section]
\newtheorem{lem}[thm]{Lemma}
\newtheorem{prop}[thm]{Proposition}
\newtheorem{conj}[thm]{Conjecture}
\begin{document}
\renewcommand{\thefootnote}{\fnsymbol{footnote}}
\footnotetext{\emph{2000 Mathematics Subject Classification:} 14R10, 14R15}
\footnotetext{\emph{Key words:} Polynomial automorphisms and symplectomorphisms, Jacobian conjecture, tame and wild automorphisms, quantization.}
\renewcommand{\thefootnote}{\arabic{footnote}}
\fontsize{11}{11pt}\selectfont
\title{\bf Lifting of Polynomial Symplectomorphisms and Deformation Quantization}
\renewcommand\Affilfont{\itshape\small}
\author[1]{Alexei Kanel-Belov\thanks{kanel@mccme.ru}}
\author[2]{Sergey Grigoriev\thanks{gregomaths@rambler.ru}}
\author[3]{Andrey Elishev\thanks{elishev@phystech.edu}}
\author[4]{Jie-Tai Yu\thanks{jietaiyu@szu.edu.cn}}
\author[5]{Wenchao Zhang\thanks{whzecomjm@gmail.com}}
\affil[1]{College of Mathematics and Statistics, Shenzhen University, Shenzhen, 518061, China}
\affil[2]{Department of Mechanics and Mathematics, Lomonosov Moscow State University, Vorobievy Gory, Moscow, 119898, Russia}
\affil[3]{Laboratory of Advanced Combinatorics and Network Applications, Moscow Institute of Physics and Technology, Dolgoprudny, Moscow Region, 141700, Russia}
\affil[4]{Shenzhen University, Shenzhen, 518061, China}
\affil[5]{Mathematics Department, Bar-Ilan University, Ramat-Gan, 52900, Israel}

\date{}

\maketitle

\renewcommand{\abstractname}{Abstract}
\begin{abstract}

We study the problem of lifting of polynomial symplectomorphisms in characteristic zero to automorphisms of the Weyl algebra by means of approximation by tame automorphisms. In 1983, Anick proved the fundamental result on approximation of polynomial automorphisms. We obtain similar approximation theorems for symplectomorphisms and Weyl algebra authomorphisms. We then formulate the lifting problem. More precisely, we prove the possibility of lifting of a symplectomorphism to an automorphism of the power series completion of the Weyl algebra of the corresponding rank. The lifting problem has its origins in the context of deformation quantization of the affine space and is closely related to several major open problems in algebraic geometry and ring theory.

This paper is a continuation of the study \cite{KBRZh}.

\end{abstract}

\section{Introduction}

One of the active research areas in ring theory concerns the geometry of polynomial endomorphisms -- that is, endomorphisms of finitely generated associative algebras (typically over a field $\mathbb{K}$) subject to a set of polynomial identities and possibly carrying other structures. Arguably, the most renowned -- and notoriously difficult -- open problems in this area is the Jacobian conjecture of Keller \cite{vdE}, open for all $N\geq 2$:

\begin{conj}
If $\mathbb{K}$ is a field of characteristic zero and $\varphi:\mathbb{A}^N_{\mathbb{K}}\rightarrow \mathbb{A}^N_{\mathbb{K}}$ is a polynomial mapping of the affine space of dimension $N$ with unit Jacobian:
\begin{equation*}
\J(\varphi)= \Det \left[\frac{\partial \varphi(x_i)}{\partial x_j}\right]=1
\end{equation*}
 then $\varphi$ is invertible (and the inverse is also a polynomial mapping).
\end{conj}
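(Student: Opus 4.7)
The Jacobian Conjecture is a long-standing open problem, so any ``proof proposal'' must be read as an outline of the attack strategy most closely aligned with the present paper rather than a complete argument. The plan is to reduce the conjecture to a statement about the Weyl algebra and then exploit the approximation-and-lifting machinery developed here.

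First I would invoke the standard reductions. By Bass--Connell--Wright and Yagzhev it suffices to treat $\varphi(x)=x+H(x)$ with $H$ cubic-homogeneous and $\J H$ nilpotent, and by the Tsuchimoto / Belov-Kanel--Kontsevich theorem the conjecture in dimension $2N$ is equivalent to the Dixmier conjecture asserting $\End A_N(\mathbb{K})=\Aut A_N(\mathbb{K})$. After doubling the coordinates and introducing the standard symplectic form, a hypothetical Jacobian counterexample in $N$ variables produces a polynomial symplectomorphism of $\mathbb{A}^{2N}_{\mathbb{K}}$, so the problem passes into the symplectic world where the present paper operates.

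Next I would apply the approximation theorem for symplectomorphisms proved earlier in the paper to obtain a sequence $\varphi_n$ of tame symplectomorphisms converging to $\varphi$ in the appropriate filtration topology. Each tame $\varphi_n$ admits, by the very definition of tameness and the elementary lifts of $\SL$-type and triangular generators, a canonical lift $\widehat{\Phi}_n\in\Aut\widehat{A}_N$ to the $\hbar$-adically completed Weyl algebra. The approximation statement guarantees Cauchyness of $(\widehat{\Phi}_n)$, so one obtains a limit $\widehat{\Phi}\in\Aut\widehat{A}_N$ quantizing $\varphi$. Invertibility of $\widehat{\Phi}$, once descended to $A_N$, would yield invertibility of $\varphi$ via the semiclassical limit $\hbar\to 0$ and complete the argument.

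The hard part, and the reason the conjecture is still open, is the descent from $\widehat{A}_N$ to $A_N$: the tame lifts $\widehat{\Phi}_n$ are given by power series in $\hbar$ with a priori unbounded $\hbar$-adic valuations of their coefficients, so the limit $\widehat{\Phi}$ need not be polynomial in $\hbar$, and hence need not correspond to an honest Weyl algebra automorphism. A secondary obstacle is the potential appearance of wild symplectomorphisms in the limit, which would be invisible to the tame approximation scheme; controlling this requires a quantitative refinement of the Anick-type approximation result in the symplectic and Weyl-algebra settings. These two issues are precisely what the program initiated in \cite{KBRZh} and continued here is designed to address, and completing them is the substance of any proof of the conjecture along this route.
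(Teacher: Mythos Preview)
The statement is the Jacobian Conjecture, listed in the paper as Conjecture~1.1; the paper does \emph{not} prove it and does not claim to. There is therefore no ``paper's own proof'' to compare against. You have recognized this and, appropriately, offered a programmatic outline rather than an argument, and that outline is indeed the strategy the paper is advertising: approximate by tame symplectomorphisms (Theorem~3.2), lift tame symplectomorphisms to $W_n$ via the Kanel-Belov--Kontsevich isomorphism, take a limit in the completion, and try to descend. Your identification of the descent step (showing the limit is polynomial rather than a genuine power series) as the essential obstruction matches exactly what the paper says after Proposition~4.4.

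One point of sloppiness worth flagging: you pass from a unit-Jacobian map $\varphi:\mathbb{A}^N\to\mathbb{A}^N$ to a ``polynomial symplectomorphism of $\mathbb{A}^{2N}$'' and then invoke the approximation theorem. But a symplectomorphism in this paper is by definition an element of $\Aut P_n$, i.e.\ an \emph{automorphism}, and the cotangent lift of $\varphi$ is only known to be a Poisson \emph{endomorphism} until $\varphi$ is proved invertible---which is the very thing at issue. Anick's original theorem does cover \'etale endomorphisms, and the proof of Theorem~3.2 in the paper in fact only uses the Poisson-preservation identities, so the approximation step can plausibly be salvaged; but as written your passage between ``endomorphism with unit Jacobian'' and ``symplectomorphism'' begs the question. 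This does not change the overall status---the conjecture remains open and the descent obstruction you name is the real one---but the write-up should keep the endomorphism/automorphism distinction straight.
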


Tsuchimoto \cite{Tsu1, Tsu2}, and independently Kanel-Belov and Kontsevich \cite{BKK2}, found a deep connection between the Jacobian conjecture and a celebrated conjecture of Dixmier \cite{Dix} on endomorphisms of the Weyl algebra, which is stated as follows:

\begin{conj}
Any endomorphism $\phi$ of the $n$-th Weyl algebra $W_n(\mathbb{K})$ in characteristic zero is invertible.
\end{conj}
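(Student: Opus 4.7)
The plan is to combine the Tsuchimoto--Kanel-Belov--Kontsevich mod-$p$ reduction with the approximation and tame-lifting machinery of this paper, aiming at the full unconditional statement. Given $\phi\in\End W_n(\mathbb K)$, I spread out over a finitely generated $\mathbb Z$-subalgebra $R\subset\mathbb K$ and reduce modulo maximal ideals of residue characteristic $p\gg 0$. The center of $W_n(\mathbb F_p)$ is the polynomial algebra in $x_i^p,y_i^p$, and $\phi$ restricts to a polynomial endomorphism of $\Spec Z(W_n(\mathbb F_p))\cong\mathbb A^{2n}_{\mathbb F_p}$; preservation of the commutator forces it to preserve the induced Poisson bracket, so this restriction is a polynomial symplectomorphism. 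Patching the mod-$p$ data via the $p$-curvature formalism yields a characteristic-zero symplectomorphism $\sigma$ canonically associated with $\phi$.

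Next I would invoke the two engines of this paper. The symplectic Anick-type approximation writes $\sigma=\lim_k\sigma_k$ with each $\sigma_k$ tame, and the tame-lifting theorem produces a corresponding sequence $\Phi_k\in\TAut W_n(\mathbb K)$. Comparison of mod-$p$ symbols for a Zariski-dense set of primes, together with uniqueness of quantization in each characteristic, forces $\phi=\lim_k\Phi_k$ in the $\mathfrak m$-adic topology of $W_n(\mathbb K)$. Because each $\Phi_k$ is tame it admits an explicit tame inverse $\Phi_k^{-1}\in\TAut W_n(\mathbb K)$, and I would construct the two-sided inverse of $\phi$ as $\lim_k\Phi_k^{-1}$, verifying that the limit lies inside $W_n(\mathbb K)$ itself rather than only in its completion, which establishes the conjecture.

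The decisive input is a uniform degree bound $\Deg\Phi_k^{-1}\le C(\Deg\phi)$ independent of $k$, which prevents the inverse from escaping into the completion $\widehat W_n$ and lands it back in $W_n(\mathbb K)$. I would establish this in two stages: first, track the symplectic elementary decompositions of the $\sigma_k$, using that each elementary factor alters one coordinate at a time and has a tame inverse of the same degree, so bounding the tame length of $\sigma_k$ suffices; second, run a stabilization argument via the classical symbol of the expected inverse, whose polynomial degree is pinned by the Jacobian of $\sigma$, which combined with Bergman-style rigidity of Weyl endomorphisms should force $\Deg\Phi_k^{-1}$ to saturate past some $k_0$. This uniform tame-length bound is the principal obstacle: current constructive approximation algorithms do not yield a degree-uniform decomposition, and producing one --- either by a new symplectic tame-decomposition procedure or by direct control of the Weyl quantum corrections uniformly in $k$ --- is the heart of the matter on which the unconditional proof hinges.
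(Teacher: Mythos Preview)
The statement you are attempting to prove is the Dixmier conjecture, which the paper presents as an \emph{open conjecture} (Conjecture~1.2). The paper does not prove it; rather, it develops approximation theorems (Theorems~3.1, 3.2) and discusses the lifting problem as a potential line of attack. There is therefore no proof in the paper to compare your proposal against.

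Your proposal contains two genuine gaps, the second of which you yourself identify. First, the assertion that $\phi=\lim_k\Phi_k$ is not justified. The tame automorphisms $\Phi_k$ are obtained by lifting the tame symplectomorphisms $\sigma_k$ approximating $\sigma$; but the paper's Proposition~4.4 only says that the images of the $\Phi_k$ downstairs approximate $\sigma$, not that the $\Phi_k$ themselves converge to $\phi$ upstairs. Your appeal to ``uniqueness of quantization in each characteristic'' and ``comparison of mod-$p$ symbols'' is exactly the content of the lifting problem the paper leaves open: a priori the sequence $\Phi_k$ might converge only in the power-series completion $\widehat{W}_n$, and even then to some element other than $\phi$. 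This is not a technicality that can be finessed by density of primes.

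Second, and more fundamentally, the uniform degree bound $\Deg\Phi_k^{-1}\le C(\Deg\phi)$ is the entire difficulty, as you correctly concede in your final paragraph. Your proposed two-stage argument does not work: bounding the tame length of $\sigma_k$ is hopeless because the approximation procedure in Lemma~3.4 and Lemma~4.2 produces an \emph{increasing} number of elementary factors as $k\to\infty$, with no a priori degree control on the composite; and the ``stabilization via the classical symbol'' step presupposes that the inverse exists as a polynomial object, which is what you are trying to prove. A proof proposal whose decisive step is flagged by its own author as an unresolved obstacle is a research program, not a proof.
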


\medskip

The correspondence between the two open problems, in the case of algebraically closed $\mathbb{K}$, is based on the existence of a composition-preserving map
\begin{equation*}
\End W_n(\mathbb{K})\rightarrow \End \mathbb{K}[x_1,\ldots,x_{2n}]
\end{equation*}
which is a homomorphism for the corresponding automorphism groups. Furthermore, the mappings that belong to the image of this homomorphism preserve the standard symplectic form on the $2n$-dimensional affine space $\mathbb{A}^{2n}_{\mathbb{K}}$.
In accordance with this, Kontsevich and Kanel-Belov \cite{BKK1} formulated several conjectures on correspondence between automorphisms of the Weyl algebra $W_n$ and the Poisson algebra $P_n$ (which is the polynomial algebra $\mathbb{K}[x_1,\ldots,x_{2n}]$ endowed with the standard Poisson bracket) in characteristic zero. In particular, there is a
\begin{conj}
The automorphism groups of the $n$-th Weyl algebra and the polynomial algebra in $2n$ variables with Poisson structure over the rational numbers are isomorphic:
\begin{equation*}
\Aut W_n(\mathbb{Q})\simeq \Aut P_n(\mathbb{Q})
\end{equation*}
\end{conj}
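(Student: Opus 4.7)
The plan is to construct the isomorphism by building an inverse to the composition-preserving map $\Aut W_n(\mathbb{Q})\to\Aut P_n(\mathbb{Q})$ already supplied by the Tsuchimoto/Kanel-Belov-Kontsevich correspondence, whose image is known to lie in the symplectic automorphism group. Given a symplectomorphism $\sigma$ of $\mathbb{A}^{2n}_{\mathbb{Q}}$, one must produce a canonical $\hat{\sigma}\in\Aut W_n(\mathbb{Q})$ projecting to it, and verify that the resulting assignment is a group homomorphism inverse to the one above.

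The first step is to handle the tame case. Each elementary symplectomorphism (linear symplectic, together with time-one Hamiltonian flows along polynomial Hamiltonians of triangular shape) admits an explicit lift: linear symplectic matrices act through the defining representation on the generators of $W_n(\mathbb{Q})$, while a Hamiltonian generator is promoted to a Weyl algebra element and its flow to the corresponding inner automorphism given by an exponential. This yields a section $\TAut P_n(\mathbb{Q})\to\Aut W_n(\mathbb{Q})$ on the tame subgroup, subject to checking that the defining relations among elementary symplectomorphisms are respected inside $W_n$.

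Next, I would invoke the Anick-type approximation theorem obtained in the present paper to represent an arbitrary $\sigma\in\Aut P_n(\mathbb{Q})$ as a limit, in the augmentation-adic topology, of tame symplectomorphisms $\sigma_k$. Their lifts $\hat{\sigma}_k$ should then be shown to converge in the Bernstein-filtration topology on the completed Weyl algebra $\widehat{W}_n(\mathbb{Q})$ to an automorphism $\hat{\sigma}$ of the completion; this is precisely the lifting statement announced in the abstract, and it provides a candidate inverse map valued in $\Aut \widehat{W}_n(\mathbb{Q})$.

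The principal obstacle, which separates the current lifting result from the full conjecture, is the descent from $\widehat{W}_n(\mathbb{Q})$ back to $W_n(\mathbb{Q})$: one must show that the limit $\hat{\sigma}$ sends each polynomial generator of $W_n(\mathbb{Q})$ to a noncommutative polynomial (not merely a power series) in the generators, with a priori bounded Weyl degree. This requires a uniform degree estimate for the tame lifts $\hat{\sigma}_k$, precluding any \emph{wild} growth in the noncommutative direction -- a phenomenon deeply entangled with the Jacobian conjecture. Once such a descent is established, multiplicativity and independence of the approximating sequence follow formally, yielding the desired inverse homomorphism and hence the isomorphism. The restriction to $\mathbb{Q}$-coefficients is essential: alternative routes through Kontsevich formality introduce transcendental constants, so tame approximation appears to be the only available channel that preserves rationality.
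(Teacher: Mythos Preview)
The statement you are attempting to prove is labelled as a \emph{conjecture} in the paper and is not proved there; indeed, the paper explicitly remarks that even the algebraically closed analogue (with $\mathbb{C}$ in place of $\mathbb{Q}$) is open for all $n>1$. There is therefore no proof in the paper to compare your proposal against.

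Your proposal is not a proof but a research strategy, and you acknowledge this yourself when you isolate ``the principal obstacle'': showing that the lifted limit $\hat{\sigma}$ lands in $W_n(\mathbb{Q})$ rather than merely in its completion $\widehat{W}_n(\mathbb{Q})$. This is exactly the gap the paper identifies after Proposition~4.4, where it states that ``the main problem therefore consists in verifying that these vectors have entries polynomial in generators''. No method for establishing the required uniform degree bound is known; this descent step is essentially equivalent in difficulty to the conjecture itself and is entangled with the Dixmier and Jacobian conjectures. Until that step is supplied, the outline does not constitute a proof.

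A smaller point: your claim that the tame section $\TAut P_n(\mathbb{Q})\to\Aut W_n(\mathbb{Q})$ exists ``subject to checking that the defining relations among elementary symplectomorphisms are respected inside $W_n$'' hides a genuinely nontrivial verification. Over $\mathbb{C}$ this is the content of the Kanel-Belov--Kontsevich isomorphism $\TAut P_n(\mathbb{C})\simeq\TAut W_n(\mathbb{C})$, whose construction in \cite{BKK1} passes through reduction modulo infinite primes and nonstandard analysis; it is not obtained by simply writing down lifts of generators and checking relations. Over $\mathbb{Q}$ even this tame isomorphism is not established in the literature cited, so your ``first step'' already goes beyond what is available.
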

Relatively little is known about the case $\mathbb{K}=\mathbb{Q}$, and the proof techniques developed in \cite{BKK1} rely heavily on model-theoretic objects such as infinite prime numbers (in the sense of non-standard analysis); that in turn requires the base field $\mathbb{K}$ to be of characteristic zero and algebraically closed (effectively $\mathbb{C}$ by the Lefschetz principle). However, even the seemingly easier analogue of the above conjecture, the case $\mathbb{K}=\mathbb{C}$, is known (and positive) only for $n=1$.

\medskip

In the case $n=1$, the affirmative answer to the Kontsevich conjecture, as well as positivity of several isomorphism statements for algebras of similar nature, relies on the fact that all automorphisms of the algebras in question are tame (see definition below). Groups of tame automorphisms are rather interesting objects. Anick \cite{An} has proved that the group of tame automorphisms of $\mathbb{K}[x_1,\ldots,x_N]$ is dense (in power series topology) in the subspace of all endomorphisms with non-zero constant Jacobian. This fundamental result enables one to reformulate the Jacobian conjecture as a statement on invertibility of limits of tame automorphism sequences.

\medskip

Another interesting problem is to ask whether all automorphisms of a given algebra are tame \cite{Jung, VdK, Czer, Shes3, Shes4}. For instance, it is the case \cite{ML2, ML3} for $\mathbb{K}[x,y]$, the free associative algebra $\mathbb{K}\langle x,y\rangle$ and the free Poisson algebra $\mathbb{K}\lbrace x,y\rbrace$. It is also the case for free Lie algebras (a result of P. M. Cohn). On the other hand, tameness is no longer the case for $\mathbb{K}[x,y,z]$ (the wild automorphism example is provided by the well-known Nagata automorphism, cf. \cite{Nagata1, Shes2}).

\medskip

Anick's approximation theorem was established for polynomial automorphisms in 1983. We obtain the approximation theorems for polynomial symplectomorphisms and Weyl algebra automorphisms. These new cases are established after more than 30 years. The focus of this paper is \textbf{the problem of lifting of symplectomorphisms}:

\medskip

-- \emph{can an arbitrary symplectomorphism in dimension $2n$ be lifted to an automorphism of the $n$-th Weyl algebra in characteristic zero?}

\medskip

The lifting problem is the milestone in the Kontsevich conjecture. The use of tame approximation is advantageous due to the fact that tame symplectomorphisms correspond to Weyl algebra automorphisms: in fact \cite{BKK1}, the tame automorphism subgroups are isomorphic when $\mathbb{K}=\mathbb{C}$.

\medskip

The problems formulated above, as well as other statements of similar flavor, outline behavior of algebro-geometric objects when subject to quantization. Conversely, quantization (and anti-quantization in the sense of Tsuchimoto) provides a new perspective for the study of various properties of classical objects; many of such properties are of distinctly K-theoretic nature. The lifting problem is a subject of a thorough study of Artamonov \cite{Art78, Art84, Art91, Art98}, one of the main results of which is the proof of an analogue of the Serre -- Quillen -- Suslin theorem for metabelian algebras. The possibility of lifting of (commutative) polynomial automorphisms to automorphisms of metabelian algebra is a well-known result of Umirbaev, cf. \cite{Umir}; the metabelian lifting property was instrumental in Umirbaev's resolution of the Anick's conjecture (which says that a specific automorphism of the free algebra $\mathbb{K}\langle x,y,z\rangle$, $\Char \mathbb{K}=0$ is wild). Related to that also is a series of well-known papers \cite{Shes1, Shes2, Shes4, Shes3}.

\medskip

An interesting and in a sense essential generalization of this line of inquiry is obtained by taking it to the realm of quantum algebra. Indeed, the algebraic and K-theoretic language of quantization can be extended naturally to account for the relevant non-commutative geometry. Accordingly, the vast majority of problems formulated above may also be posed in the quantum algebraic context. Automorphism groups of algebras of quantum polynomials were the subject of an investigation of Artamonov \cite{Art02, Art05}. Such algebras provide a generalization of the Weyl algebra, and it is a question of legitimate interest whether the topology of the corresponding automorphism group allows for approximation theorems analogous to the ones discussed in the present paper.


\medskip

We establish the approximation property for polynomial symplectomorphisms and comment on the lifting problem of polynomial symplectomorphisms and Weyl algebra automorphisms. In particular, the main results discussed here are as follows.


\medskip

\textbf{Main Theorem 1.} \emph{Let $\varphi=(\varphi(x_1),\;\ldots,\;\varphi(x_N))$ be an automorphism of the polynomial algebra $\mathbb{K}[x_1,\ldots,x_N]$ over a field $\mathbb{K}$ of characteristic zero, such that its Jacobian}
\emph{is equal to $1$. Then there exists a sequence $\lbrace \psi_k\rbrace\subset \TAut \mathbb{K}[x_1,\ldots,x_N]$ of tame automorphisms which converges to $\varphi$ in formal power series topology.}

\medskip

Anick \cite{An} proved the tame approximation theorem for polynomial automorphisms. We present a slightly modified elementary proof of Anick's theorem, which is then adapted to the problem of approximation by polynomial symplectomorphisms. That in turn allows us to attack the problem of approximating Weyl algebra automorphisms -- as tame symplectomorphisms have (unique) preimages under the Kanel-Belov -- Kontsevich homomorphism \cite{BKK1}. 

\medskip

\textbf{Main Theorem 2.}\emph{ Let $\sigma=(\sigma(x_1),\;\ldots,\;\sigma(x_n),\;\sigma(p_1),\;\ldots,\;\sigma(p_n))$ be a symplectomorphism of $\mathbb{K}[x_1,\ldots,x_n,p_1,\ldots,p_n]$ with unit Jacobian.}
\emph{Then there exists a sequence $\lbrace \tau_k\rbrace\subset \TAut P_n(\mathbb{K})$ of tame symplectomorphisms which converges to $\sigma$ in formal power series topology.}

\medskip

\textbf{Main Theorem 3.} \emph{Let $\mathbb{K}=\mathbb{C}$ and let $\sigma:P_n(\mathbb{C})\rightarrow P_n(\mathbb{C})$ be a symplectomorphism over complex numbers. Then there exists a sequence}
\begin{equation*}
\psi_1,\;\psi_2,\;\ldots,\;\psi_k,\;\ldots
\end{equation*}
\emph{of tame automorphisms of the $n$-th Weyl algebra $W_n(\mathbb{C})$, such that their images $\sigma_k$ in $\Aut P_n(\mathbb{C})$ converge to $\sigma$.}
\medskip

The last theorem is of main concern to us. As we shall see, sequences of tame symplectomorphisms lifted to automorphisms of Weyl algebra (either by means of the isomorphism of \cite{BKK1}, or explicitly through deformation quantization $P_n(\mathbb{C})\rightarrow P_n(\mathbb{C})[[\hbar]]$) are such that their limits may be thought of as power series in Weyl algebra generators. If we could establish that those power series were actually polynomials, then the Dixmier conjecture would imply the Kontsevich's conjecture (with $\mathbb{Q}$ replaced by $\mathbb{C}$). Conversely, approximation by tame automorphisms provides a possible means to attack the Dixmier conjecture (and, correspondingly, the Jacobian conjecture).

\medskip

Another important detail of approximation by tame automorphisms is its natural behavior with respect to the $\mathfrak{m}$-adic topology on local rings of automorphism varieties. This is formalized in the following two results.

\medskip

\textbf{Main Theorem 4.} Let $\varphi$ be a polynomial automorphism and let $\mathcal{O}_{\varphi}$ be the local ring of $\Aut \mathbb{C}[x_1,\ldots,x_n]$ with its maximal ideal $\mathfrak{m}$. Then there exists a tame sequence $\lbrace \psi_k\rbrace$ which converges to $\varphi$ in power series topology, such that the coordinates of $\psi_k$ converge to coordinates of $\varphi$ in $\mathfrak{m}$-adic topology.

\medskip

\textbf{Main Theorem 5.} Let $\sigma$ be a symplectomorphism and let $\mathcal{O}_{\sigma}$ be the local ring of $\Aut P_n(\mathbb{C})$ with its maximal ideal $\mathfrak{m}$. Then there exists a sequence of tame symplectomorphisms $\lbrace \sigma_k\rbrace$ which converges to $\sigma$ in power series topology, such that the coordinates of $\sigma_k$ converge to coordinates of $\sigma$ in $\mathfrak{m}$-adic topology.

\medskip

The present paper serves as a continuation and expansion of our previous study of quantization \cite{KBRZh}.

\section{Endomorphisms of $\mathbb{K}[x_1,\ldots,x_n]$, $W_n(\mathbb{K})$ and $P_n(\mathbb{K})$}
\subsection{Definitions and notation}
The $n$-th Weyl algebra $W_{n}(\mathbb{K})$ over $\mathbb{K}$ is by definition the quotient of the free associative algebra
\begin{equation*}
\mathbb{K}\langle a_1,\ldots,a_n,b_1,\ldots,b_n\rangle
\end{equation*}
by the two-sided ideal generated by elements
\begin{equation*}
b_ia_j-a_jb_i-\delta_{ij},\;\;a_ia_j-a_ja_i,\;\;b_ib_j-b_jb_i,
\end{equation*}
with $1\leq i,j\leq n$. One can think of $W_{n}(\mathbb{K})$ as the algebra
\begin{equation*}
\mathbb{K}[x_1,\ldots,x_n,y_1,\ldots,y_n]
\end{equation*}
with two sets of $n$ mutually commuting generators (images of the free generators under the canonical projection) which interact according to $[y_i,x_j]=y_ix_j-x_jy_i=\delta_{ij}$; henceforth we denote the Weyl algebra by $W_{n}(\mathbb{K})$ in order to avoid confusion with $\mathbb{K}[X]$ -- notation reserved for the ring of polynomials in commuting variables.

\medskip

The polynomial algebra $\mathbb{K}[x_1,\ldots,x_N]$ itself is the quotient of the free associative algebra by the congruence that makes all its generators commutative. When the number $N$ of generators is even, the  algebra $\mathbb{K}[x_1,\ldots,x_{2n}]$ carries an additional structure of the Poisson algebra -- namely, a bilinear map $$\lbrace \;,\;\rbrace:\mathbb{K}[x_1,\ldots,x_{2n}]\otimes \mathbb{K}[x_1,\ldots,x_{2n}]\rightarrow \mathbb{K}[x_1,\ldots,x_{2n}]$$ that turns $\mathbb{K}[x_1,\ldots,x_{2n}]$ into a Lie algebra and acts as a derivation with respect to polynomial multiplication. Under a fixed choice of generators, this map is given by the standard Poisson bracket
\begin{equation*}
\lbrace x_i,x_j\rbrace = \delta_{i,n+j}-\delta_{i+n,j}.
\end{equation*}
We denote the pair $(\mathbb{K}[x_1,\ldots,x_{2n}],\lbrace \;,\;\rbrace)$ by $P_n(\mathbb{K})$. In our discussion the coefficient ring $\mathbb{K}$ is a field of characteristic zero, and for later purposes (Proposition 4.3) we require $\mathbb{K}$ to be algebraically closed. Thus one may safely assume $\mathbb{K}=\mathbb{C}$ in the sequel.

\medskip

Throughout we assume all homomorphisms to be unital and preserving all defining structures carried by the objects in question. Thus, by a Weyl algebra endomorphism we always mean a $\mathbb{K}$-linear ring homomorphism $W_{n}(\mathbb{K})$ into itself that maps $1$ to $1$. Similarly, the set $\End \mathbb{K}[x_1,\ldots,x_n]$ consists of all $\mathbb{K}$-endomorphisms of the polynomial algebra, while $\End P_{n}$ is the set of polynomial endomorphisms preserving the Poisson structure. We will call elements of the group $\Aut P_{n}$ \textbf{polynomial symplectomorphisms}, due to the fact that they can be identified with polynomial one-to-one mappings $\mathbb{A}^{2n}_{\mathbb{K}}\rightarrow\mathbb{A}^{2n}_{\mathbb{K}}$ of the affine space $\mathbb{A}^{2n}_{\mathbb{K}}$ which preserve the symplectic form
\begin{equation*}
\omega=\sum_{i}dp_i\wedge dx_i.
\end{equation*}

\medskip

Any endomorphism $\varphi$ of $\mathbb{K}[x_1,\ldots,x_N]$, $P_n(\mathbb{K})$ or $W_n(\mathbb{K})$ can be identified with the ordered set
\begin{equation*}
(\varphi(x_1),\;\varphi(x_2),\;\ldots)
\end{equation*}
of images of generators of the corresponding algebra. For $\mathbb{K}[x_1,\ldots,x_N]$ and $P_n(\mathbb{K})$, the polynomials $\varphi(x_i)$ can be decomposed into sums of homogeneous components; this means that the endomorphism $\varphi$ may be written as a formal sum
\begin{equation*}
\varphi = \varphi_0+\varphi_1+\cdots,
\end{equation*}
where $\varphi_k$ is a string (of length $N$ and $2n$, respectively) whose entries are homogeneous polynomials of total degree $k$.\footnote{We set $\Deg x_i=1$.} Accordingly, the height $\Ht(\varphi)$ of the endomorphism is defined as
\begin{equation*}
\Ht(\varphi)=\inf\lbrace k\;|\;\varphi_k\neq 0\rbrace,\;\;\Ht(0)=\infty.
\end{equation*}
This is not to be confused with the degree of endomorphism, which is defined as $\Deg(\varphi)=\sup\lbrace k\;|\;\varphi_k\neq 0\rbrace$.\footnote{For $W_n$ the degree is well defined, but the height depends on the ordering of the generators.} The height $\Ht(f)$ of a polynomial $f$ is defined quite similarly to be the minimal number $k$ such that the homogeneous component $f_k$ is not zero. Evidently, for an endomorphism $\varphi=(\varphi(x_1),\;\ldots,\;\varphi(x_N))$ one has
\begin{equation*}
\Ht(\varphi)=\inf\lbrace \Ht(\varphi(x_i))\;|\;1\leq i\leq N\rbrace.
\end{equation*}

\medskip

The function
\begin{equation*}
d(\varphi,\psi)=\exp(-\Ht(\varphi-\psi))
\end{equation*}
is a metric on $\End \mathbb{K}[x_1,\ldots,x_N]$. We will refer to the corresponding topology on $\End$ (and on subspaces such as $\Aut$ and $\TAut$) as the formal power series topology.

\medskip

\subsection{Tame automorphisms}
We call an automorphism $\varphi\in\Aut\mathbb{K}[x_1,\ldots,x_N]$ \textbf{elementary} if it is of the form
\begin{equation*}
\varphi = (x_1,\ldots,\;x_{k-1},\;ax_k+f(x_1,\ldots,x_{k-1},\;x_{k+1},\;\ldots,\;x_N),\;x_{k+1},\;\ldots,\;x_N)
\end{equation*}
with $a\in\mathbb{K}^{\times}$. Observe that linear invertible changes of variables -- that is, transformations of the form
\begin{equation*}
(x_1,\;\ldots,\;x_N)\mapsto (x_1,\;\ldots,\;x_N)A,\;\;A\in\GL(N,\mathbb{K})
\end{equation*}
are realized as compositions of elementary automorphisms.

The subgroup of $\Aut\mathbb{K}[x_1,\ldots,x_N]$ generated by all elementary automorphisms is the group $\TAut \mathbb{K}[x_1,\ldots,x_N]$ of so-called \textbf{tame automorphisms}.

\medskip

Let $P_{n}(\mathbb{K})=\mathbb{K}[x_1,\ldots,x_{n},p_1,\ldots,p_n]$ be the polynomial algebra in $2n$ variables with Poisson structure. It is clear that for an elementary $\varphi\in\Aut\mathbb{K}[x_1,\ldots,x_{n},p_1,\ldots,p_n]$ to be a symplectomorphism, it must be either a linear symplectic change of variables -- that is, a transformation of the form
\begin{equation*}
(x_1,\;\ldots,\;x_n,\;p_1,\;\ldots,\;p_n)\mapsto (x_1,\;\ldots,\;x_n,\;p_1,\;\ldots,\;p_n)A
\end{equation*}
with $A\in\Sp(2n,\mathbb{K})$ a symplectic matrix, or an elementary transformation of one of two following types:
\begin{equation*}
(x_1,\;\ldots,\;x_{k-1},\;x_k+f(p_1,\;\ldots,\;p_n),\;x_{k+1},\;\ldots,\;x_n,\;p_1,\;\ldots,\;p_n)
\end{equation*}
and
\begin{equation*}
(x_1,\;\ldots,\;x_{n},\;p_1,\;\ldots,\;p_{k-1},\;p_k+g(x_1,\;\ldots,\;x_n),\;p_{k+1},\;\ldots,\;p_n).
\end{equation*}
Note that in both cases we do not include translations of the affine space into our consideration, so we may safely assume the polynomials $f$ and $g$ to be at least of height one.

The subgroup of $\Aut P_{n}(\mathbb{K})$ generated by all such automorphisms is the group $\TAut P_{n}(\mathbb{K})$ of \textbf{tame symplectomorphisms}. One similarly defines the notion of tameness for the Weyl algebra $W_n(\mathbb{K})$, with tame elementary automorphisms having the exact same form as for $P_n(\mathbb{K})$.

\medskip

The automorphisms which are not tame are called \textbf{wild}. It is unknown at the time of writing whether the algebras $W_n$ and $P_n$ have any wild automorphisms in characteristic zero for $n>1$, however for $n=1$ all automorphisms are known to be tame \cite{Jung, VdK, ML1, ML2}. On the other hand, the celebrated example of Nagata
\begin{equation*}
(x+(x^2-yz)x,\;y+2(x^2-yz)x+(x^2-yz)^2z,\;z)
\end{equation*}
provides a wild automorphism of the polynomial algebra $\mathbb{K}[x,y,z]$.

\medskip

It is known due to Kanel-Belov and Kontsevich \cite{BKK1,BKK2} that for $\mathbb{K}=\mathbb{C}$ the groups
\begin{equation*}
\TAut W_n(\mathbb{C})\;\;\text{and}\;\;\TAut P_n(\mathbb{C})
\end{equation*}
are isomorphic. The homomorphism between the tame subgroups is obtained by means of non-standard analysis and involves certain non-constructible entities, such as free ultrafilters and infinite prime numbers. Recent effort \cite{K-BE1, K-BE2} has been directed to proving the homomorphism's independence of such auxiliary objects, with limited success.

\section{Approximation by tame automorphisms}

Let $\varphi\in\Aut \mathbb{K}[x_1,\ldots,x_N]$ be a polynomial automorphism. We say that $\varphi$ is approximated by tame automorphisms if there is a sequence
\begin{equation*}
\psi_1,\;\psi_2,\ldots,\;\psi_k,\ldots
\end{equation*}
of tame automorphisms such that
\begin{equation*}
\Ht((\psi_k^{-1}\circ\varphi)(x_i)-x_i)\geq k
\end{equation*}
for $1\leq i\leq N$ and all $k$ sufficiently large. Observe that any tame automorphism $\psi$ is approximated by itself -- that is, by a stationary sequence $\psi_k=\psi$.

\medskip

This and the next section are dedicated to the proof of the first two main results stated in the introduction, which we reproduce here.

\begin{thm}
Let $\varphi=(\varphi(x_1),\;\ldots,\;\varphi(x_N))$ be an automorphism of the polynomial algebra $\mathbb{K}[x_1,\ldots,x_N]$ over a field $\mathbb{K}$ of characteristic zero, such that its Jacobian
\begin{equation*}
\J(\varphi)=\Det \left[\frac{\partial \varphi(x_i)}{\partial x_j}\right]
\end{equation*}
is equal to $1$. Then there exists a sequence $\lbrace \psi_k\rbrace\subset \TAut \mathbb{K}[x_1,\ldots,x_N]$ of tame automorphisms approximating $\varphi$.
\end{thm}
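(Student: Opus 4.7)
The plan is to induct on the height and remove the lowest-degree nontrivial homogeneous component at each step by composing with a suitable tame automorphism. First, by multiplying on the left by the linear tame automorphism corresponding to $(D\varphi)(0)^{-1}\in\GL(N,\mathbb{K})$, I may assume the linear part of $\varphi$ is the identity, so $\varphi(x_i)=x_i+(\text{terms of height}\geq 2)$. The Jacobian condition $\J(\varphi)=1$ is preserved under tame left multiplication and will constrain the inductive step.

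Inductively, suppose we have produced a tame $\psi_{k-1}$ with $\Ht(\psi_{k-1}^{-1}\varphi-\mathrm{id})\geq k$. Expand
\[
\psi_{k-1}^{-1}\varphi = \mathrm{id} + V_k + (\text{height}\geq k+1),
\]
where $V_k=(f_1,\dots,f_N)$ consists of homogeneous polynomials of degree $k$. Expanding $\J(\psi_{k-1}^{-1}\varphi)=1$ in the lowest surviving degree $k-1$ yields the divergence-free condition $\sum_i \partial f_i/\partial x_i = 0$. The next step is to build a tame $\theta_k=\mathrm{id}+V_k+(\text{higher})$: then $\psi_k:=\psi_{k-1}\theta_k$ satisfies $\Ht(\psi_k^{-1}\varphi-\mathrm{id})\geq k+1$, and the formal-power-series limit of $\lbrace\psi_k\rbrace$ is $\varphi$.

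The crux is therefore the following approximation lemma: \emph{every homogeneous divergence-free vector field of degree $k$ arises as the height-$k$ leading term of a finite product of tame elementary automorphisms.} An elementary shear $\tau_{i,g}:x_i\mapsto x_i+g(x_1,\dots,\hat{x}_i,\dots,x_N)$ contributes a leading term $(0,\dots,g,\dots,0)$ which is automatically divergence-free since $\partial g/\partial x_i=0$, and compositions add such leading terms. To capture divergence-free fields with diagonal contributions (where the $i$-th component involves $x_i$), one uses commutators $\tau\tau'\tau^{-1}(\tau')^{-1}$, whose leading behaviour at height $d_1+d_2-1$ is the Lie bracket of the respective leading vector fields. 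The lemma then reduces to the purely algebraic assertion that the Lie subalgebra of polynomial vector fields generated by $\lbrace g\partial_i : g\in\mathbb{K}[x_1,\dots,\hat{x}_i,\dots,x_N]\rbrace$ exhausts all divergence-free polynomial vector fields, which one verifies by an inductive monomial calculation (for $N=2$ it amounts to the Poisson subalgebra generated by $\mathbb{K}[x_1]\cup\mathbb{K}[x_2]$ being all of $\mathbb{K}[x_1,x_2]$).

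The main obstacle is the combinatorial bookkeeping in the inductive step: one must realize a prescribed $V_k$ as the leading term of a specific word in elementaries while simultaneously ensuring that the higher-order by-products from the composition and commutator expansions do not disturb the already-corrected lower-degree terms. Once this is managed carefully, the sequence $\psi_k=\theta_1\circ\cdots\circ\theta_k$ is tame by construction and approximates $\varphi$ to arbitrary order in the formal power series topology, proving the theorem.
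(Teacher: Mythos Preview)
Your approach is correct and shares the inductive skeleton of the paper's proof (normalize the linear part, then eliminate one homogeneous degree at a time), but the mechanism in the key step is genuinely different. The paper does not pass through the Lie algebra of divergence-free vector fields; instead it kills the degree-$k$ layer by conjugating elementary transvections by linear maps: the tame map $\Phi_{\lambda\mu}=\psi_\mu\circ\phi_{\lambda\mu}\circ\psi_\mu^{-1}$ sends $x_1\mapsto x_1-\lambda(x_1+\mu x_2)^d$ and $x_2\mapsto x_2-\lambda\mu^{-1}(x_1+\mu x_2)^d$ with $\lambda\in\mathbb{K}[x_3,\dots,x_n]$, and since the forms $(x_1+\mu x_2)^d$ span the degree-$d$ binary forms over an infinite field, a finite product of such $\Phi$'s annihilates $f_1$; iterating reduces to a single surviving $f_n$, which the Jacobian condition forces to satisfy $\partial f_n/\partial x_n=0$, so one last elementary shear completes the step. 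Your route instead realizes diagonal pieces via group commutators and reduces everything to the assertion that the elementary shears $g\,\partial_i$ (with $g$ independent of $x_i$) Lie-generate all divergence-free polynomial vector fields; this is true, and your $N=2$ remark is exactly right, but for $N\geq 3$ the ``inductive monomial calculation'' you allude to is where the actual content sits, so you should spell it out. The paper's argument is more explicit and entirely self-contained (pure linear algebra over an infinite field), while yours is more structural and explains \emph{why} the step works in terms of the graded Lie algebra of $\Aut\mathbb{K}[x_1,\dots,x_N]$. One minor point: the ``main obstacle'' you flag in your last paragraph is not an obstacle at all---once $\theta_k=\mathrm{id}+V_k+(\text{height}\geq k+1)$ is built, composition automatically leaves degrees below $k$ untouched, so no extra bookkeeping is required.
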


\begin{thm}
Let $\sigma=(\sigma(x_1),\;\ldots,\;\sigma(x_n),\;\sigma(p_1),\;\ldots,\;\sigma(p_n))$ be a symplectomorphism of $\mathbb{K}[x_1,\ldots,x_n,p_1,\ldots,p_n]$ with unit Jacobian.
Then there exists a sequence $\lbrace \tau_k\rbrace\subset \TAut P_n(\mathbb{K})$ of tame symplectomorphisms approximating $\sigma$.
\end{thm}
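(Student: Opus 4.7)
The plan is to parallel the inductive construction of Theorem 3.1, with tame polynomial automorphisms replaced by tame symplectomorphisms and the Jacobian constraint replaced by the stronger Hamiltonian constraint coming from symplecticness. Starting from $\tau_0 = \mathrm{id}$, I would inductively build $\tau_k$ so that $\Ht(\tau_k^{-1}\sigma - \mathrm{id})$ strictly increases with $k$. Suppose $\tau_{k-1}$ has been constructed with $\Ht(\tau_{k-1}^{-1}\sigma - \mathrm{id}) \geq m$ for some $m \geq 2$; writing
\begin{equation*}
\tau_{k-1}^{-1}\sigma = \mathrm{id} + H_m + (\text{terms of height}\geq m+1),
\end{equation*}
where $H_m$ is a string of homogeneous polynomials of degree $m$, I seek a tame symplectomorphism $\theta_m \in \TAut P_n(\mathbb{K})$ of the form $\mathrm{id} + H_m + (\text{height}\geq m+1)$ and set $\tau_k = \tau_{k-1}\theta_m$. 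A direct check then gives $\tau_k^{-1}\sigma = \mathrm{id} + (\text{height}\geq m+1)$, and the induction continues indefinitely.

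The essential new ingredient, absent from the polynomial setting of Theorem 3.1, is that $H_m$ is severely constrained. Expanding the symplectic defining relations $\{(\tau_{k-1}^{-1}\sigma)(x_i),(\tau_{k-1}^{-1}\sigma)(p_j)\} = \delta_{ij}$ and $\{(\tau_{k-1}^{-1}\sigma)(x_i),(\tau_{k-1}^{-1}\sigma)(x_j)\} = 0 = \{(\tau_{k-1}^{-1}\sigma)(p_i),(\tau_{k-1}^{-1}\sigma)(p_j)\}$ to lowest nontrivial order forces $H_m$ to be the Hamiltonian vector field $X_F$ of a single homogeneous polynomial $F \in P_n(\mathbb{K})$ of degree $m+1$, namely $H_m(x_i) = -\partial F/\partial p_i$ and $H_m(p_i) = \partial F/\partial x_i$. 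The whole task therefore reduces to the following key lemma: for every homogeneous polynomial $F$ of degree $\geq 3$, there exists a tame symplectomorphism $\theta_F \in \TAut P_n(\mathbb{K})$ whose leading correction is exactly $X_F$.

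I would prove this lemma Lie-algebraically. The Hamiltonian flow of a pure-$x$ polynomial $A(x_1,\ldots,x_n)$ is the closed-form tame map $x_i \mapsto x_i$, $p_i \mapsto p_i + \partial A/\partial x_i$, a composition of the elementary generators of $\TAut P_n$; likewise for pure-$p$ Hamiltonians; and quadratic Hamiltonians exponentiate to linear symplectic transformations in $\Sp(2n,\mathbb{K})$, which are tame. These three families generate a Lie subalgebra $\mathcal{L} \subset P_n(\mathbb{K})$ under the Poisson bracket, and the direct computation $\{x^\gamma,p^\delta\} = \sum_i \gamma_i\delta_i\, x^{\gamma-e_i}p^{\delta-e_i}$ shows that iterated brackets of pure-$x$ and pure-$p$ monomials produce every mixed monomial $x^\alpha p^\beta$ in a given total degree (by solving a full-rank linear system), so $\mathcal{L}$ exhausts all homogeneous polynomials of degree $\geq 2$. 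Writing $F$ as a combination of iterated Poisson brackets of such simple Hamiltonians and replacing each bracket with the group-theoretic commutator of the corresponding tame exponentials, the Baker--Campbell--Hausdorff expansion together with the identity $[X_G, X_H] = X_{\{G,H\}}$ yields a tame $\theta_F$ whose leading contribution is $X_F$ and whose error terms lie in strictly higher height.

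The main obstacle is precisely this last step: one needs a sufficiently degree-controlled realization of an arbitrary homogeneous $F$ as an iterated Poisson bracket of pure-$x$, pure-$p$, and quadratic Hamiltonians, so that the Baker--Campbell--Hausdorff corrections arising upon exponentiation are genuinely confined to height $\geq m+1$ and cannot contaminate the targeted cancellation of $H_m$ at height $m$. Once this degree-controlled decomposition is in hand, the convergence $\tau_k \to \sigma$ in the formal power series topology follows immediately from the inductive height gains, and the theorem is proved.
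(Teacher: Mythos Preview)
Your inductive framework coincides with the paper's: after a linear symplectic change one is at $\mathrm{id}$ modulo height $\geq 2$; the lowest remaining obstruction is the Hamiltonian vector field $X_F$ of a single homogeneous $F\in A_{m+1}$; and one seeks a tame symplectomorphism whose leading correction is exactly $X_F$. The divergence is only in how this last existence statement is established.

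Your Baker--Campbell--Hausdorff/commutator mechanism has a real gap at the base of the induction. For the group commutator $[\theta_G,\theta_H]$ to have leading term $X_{\{G,H\}}$ with all BCH remainders confined to strictly higher height, both $\theta_G$ and $\theta_H$ must themselves be of the form $\mathrm{id}+(\text{height}\geq 2)$, i.e.\ both Hamiltonians must have degree $\geq 3$. But for cubic $F$ one has $\deg G+\deg H=5$, so one factor is forced to be quadratic. The exponential of a quadratic Hamiltonian is a linear map $S\in\Sp(2n,\mathbb{K})$, which is \emph{not} close to the identity; the commutator $[S,\theta_H]$ then has leading correction $X_{S\cdot H-H}$, not $X_{\{G,H\}}$. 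Already for $n=1$ this bites: in degree $4$, brackets $\{A(x),B(p)\}$ with $\deg A,\deg B\geq 3$ produce only multiples of $x^{2}p^{2}$, so $x^{3}p$ is unreachable by commutators of higher-degree pure flows alone. Since the higher-$m$ steps of your scheme presuppose tame realizations of cubic Hamiltonians, the recursion stalls at the first step. The obstacle you flag at the end is therefore not merely one of controlling BCH error terms; it is that quadratic Hamiltonians cannot be fed into the BCH recipe at all.

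The paper circumvents this by replacing ``commutator with a quadratic'' by ``conjugation by $\Sp(2n,\mathbb{K})$''. It observes that the set of $F\in A_{m+1}$ realizable as the leading correction of some tame symplectomorphism is closed under addition (compose the tame maps) and under the linear $\Sp(2n,\mathbb{K})$-action (conjugate by $S$), hence is an $\Sp(2n,\mathbb{K})$-invariant $\mathbb{K}$-submodule. It is nonzero because pure-$x$ Hamiltonians are realized directly by elementary symplectomorphisms. The key Lemma~4.3 of the paper then shows that $A_d$ has no proper nonzero $\Sp(2n,\mathbb{K})$-invariant submodule, so the realizable set is all of $A_{m+1}$. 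This representation-theoretic irreducibility does exactly the work you wanted from the Lie-algebraic generation statement, but without any height-control issue; once you trade commutators-with-quadratics for conjugation-by-$\Sp$, your argument becomes the paper's.
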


Theorem 3.1 is a special case of a classical result of Anick \cite{An} (Anick proved approximation for all \'etale maps, not just automorphisms). We give here a slightly simplified proof suitable for our context. The second theorem first appeared in \cite{SG} and is essential in our approach to the lifting problem in deformation quantization.

\medskip

The proof of Theorem 3.1 consists of several steps each of which amounts to composing a given automorphism $\varphi$ with a tame transformation of a specific type -- an operation which allows one to dispose in $\varphi(x_i)$ ($1\leq i\leq N$) of monomial terms of a given total degree, assuming that the lower degree terms have already been dealt with. Thus the approximating sequence of tame automorphisms is constructed by induction. As it was mentioned before, we disregard translation automorphisms completely: all automorphisms discussed here are origin-preserving, so that the polynomials $\varphi(x_i)$ have zero free part. This of course leads to no loss of generality.

\medskip

The process starts with the following straightforward observation.

\smallskip

\begin{lem}
There is a linear transformation $A\in\GL(N,\mathbb{K})$
\begin{equation*}
(x_1,\;\ldots,\;x_N)\mapsto (x_1,\;\ldots,\;x_N)A
\end{equation*}
such that its composition $\varphi_A$ with $\varphi$ fulfills
\begin{equation*}
\Ht(\varphi_A(x_i)-x_i)\geq 2
\end{equation*}
for all $i\in\lbrace 1,\ldots,N\rbrace$.
\end{lem}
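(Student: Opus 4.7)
The plan is to use the fact that, because $\varphi$ is an automorphism (and in particular $\J(\varphi)$ is a nonzero constant), the linear part of $\varphi$ is an invertible matrix, which we can then cancel by composing with its inverse.

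More precisely, I would first decompose $\varphi$ according to homogeneous degree. Since all automorphisms in our discussion are assumed to fix the origin, the constant term $\varphi_0$ vanishes and we can write
\begin{equation*}
\varphi = \varphi_1 + \varphi_2 + \varphi_3 + \cdots,
\end{equation*}
where $\varphi_k$ is the $N$-tuple of total-degree-$k$ parts of the coordinates $\varphi(x_i)$. The degree-one piece $\varphi_1$ is encoded by an $N\times N$ matrix $B$ over $\mathbb{K}$ via $\varphi_1(x_i) = \sum_j B_{ij}x_j$.

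The next step is to observe that $B$ must lie in $\GL(N,\mathbb{K})$. Indeed, evaluating the Jacobian matrix $[\partial\varphi(x_i)/\partial x_j]$ at the origin produces precisely $B$, and since the determinant of this Jacobian is a nonzero constant (the automorphism hypothesis alone forces $\J(\varphi)\in\mathbb{K}^{\times}$, and in our main setting it equals $1$), we have $\det B \neq 0$.

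Having established invertibility of $B$, I would take $A=B^{-1}\in\GL(N,\mathbb{K})$ and form the corresponding elementary linear substitution (which, as noted in the excerpt, is itself a composition of elementary tame automorphisms). A direct computation of the composition $\varphi_A$ shows that its degree-one part is the identity tuple $(x_1,\ldots,x_N)$; equivalently, $(\varphi_A)_1(x_i)-x_i=0$ for each $i$, so every $\varphi_A(x_i)-x_i$ has zero linear term and therefore
\begin{equation*}
\Ht(\varphi_A(x_i)-x_i)\geq 2.
\end{equation*}
There is no genuine obstacle here: the lemma is a normalization step whose only content is the invertibility of the linear part, and it serves as the base case for the inductive tame-approximation procedure in which higher-degree terms of $\varphi-\mathrm{id}$ are successively killed off by elementary tame transformations.
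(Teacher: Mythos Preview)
Your proposal is correct and follows essentially the same argument as the paper: identify the linear part of $\varphi$ as the Jacobian matrix evaluated at the origin, note it is invertible because $\J(\varphi)$ is a nonzero constant, and compose with the inverse linear change of variables to kill the degree-one discrepancy. The paper's proof is just a terser version of what you wrote.
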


\begin{proof}
Consider $$A_1=\left[\frac{\partial \varphi(x_i)}{\partial x_j}\right]\left( 0,\ldots,0\right)$$
-- the linear part of $\varphi$. Its determinant is equal to the value of $\J(\varphi)$ at zero, and $\J(\varphi)$ is a non-zero constant. Composing $\varphi$ with the linear change of variables induced by $A_1^{-1}$ (on the left) results in an automorphism $\varphi_A$ that is identity modulo $\mathit{O}(x^2)$.
\end{proof}

Using the above lemma, we may replace $\varphi$ with $\varphi_A$ (and suppress the $A$ subscript for convenience), thus considering automorphisms which are close to the identity in the formal power series topology.

\smallskip

The next lemma justifies the inductive step: suppose we have managed, by tame left action, to eliminate the terms of degree $2,\ldots, k-1$, then there is a sequence of elementary automorphisms such that their left action eliminates the term of degree $k$. This statement translates into the following lemma.

\begin{lem}
Let $\varphi$ be a polynomial automorphism such that
$$
\varphi(x_1)=x_1+f_1(x_1,\ldots,x_n)+r_1,\;\;\ldots,\;\;\varphi(x_n)=x_n+f_n(x_1,\ldots,x_n)+r_n
$$
and $f_i$ are homogeneous of degree $k$ and $r_i$ are the remaining terms (thus $\Ht(r_i)>k$). Then one can find a sequence $\sigma_1,\ldots, \sigma_m$ of tame automorphisms whose composition with $\varphi$ is given by
\begin{equation*}
\sigma_m\circ\ldots\circ\sigma_1\circ\varphi:\;x_1\mapsto x_1+F_i(x_1,\ldots,x_n)+R_1,\;\;\ldots,\;\;x_n\mapsto x_n+F_n(x_1,\ldots,x_n)+R_n
\end{equation*}
with $F_i$ homogeneous of degree $k+1$ and $\Ht(R_i)>k+1$.
\end{lem}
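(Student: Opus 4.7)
The strategy is to construct the sequence so that the left action of $\tau=\sigma_m\circ\cdots\circ\sigma_1$ on $\varphi$ wipes out the homogeneous degree-$k$ contribution of $\varphi-\mathrm{id}$. The key observation is that it suffices to exhibit a tame automorphism $\tau$ satisfying
\begin{equation*}
\Ht\bigl(\tau(x_i)-x_i+f_i(x_1,\ldots,x_n)\bigr)>k\quad\text{for every }i,
\end{equation*}
because then $\tau(\varphi(x_i))=\tau(x_i)+f_i(\tau(x))+\tau(r_i)$: the summand $f_i(\tau(x))$ differs from $f_i(x)$ only in heights $\geq 2k-1>k$ (using $k\geq 2$, since $f_i$ is homogeneous of degree $k$ and $\tau(x)-x$ has height $\geq k$), and $\tau(r_i)$ has height $>k$ automatically. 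The remainder equals $x_i$ modulo height $>k$, and its homogeneous degree-$(k+1)$ slice is the required $F_i$ while the rest is $R_i$.

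First I would extract the divergence-free constraint from the Jacobian. Since $\varphi$ is a polynomial automorphism in characteristic zero, $\J(\varphi)$ is a nonzero constant. Expanding
\begin{equation*}
\J(\varphi)=\Det\left[\delta_{ij}+\frac{\partial f_i}{\partial x_j}+(\text{terms of height}\geq k)\right],
\end{equation*}
the lowest non-constant contribution is $\sum_i\partial f_i/\partial x_i$ in degree $k-1$, and it must vanish. Hence $(f_1,\ldots,f_n)$ is a homogeneous divergence-free polynomial vector field of degree $k$.

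The central step, and the one I expect to be the main obstacle, is to realize an arbitrary such jet as the degree-$k$ jet of a tame automorphism. The atomic tame jets at our disposal are of two kinds: (i) direct atoms $g\cdot e_j$ with $g\in\mathbb{K}[x_1,\ldots,\widehat{x_j},\ldots,x_n]$, realized by the elementary move $x_j\mapsto x_j+g$; and (ii) $\GL(n,\mathbb{K})$-conjugates of (i), which are tame because linear invertible changes of variables decompose into elementary moves. A direct computation shows that the conjugate $A\circ(x_j\mapsto x_j+g)\circ A^{-1}$ has degree-$k$ jet $g(Ax)\cdot A^{-1}e_j$. Because the substitution $h(x+u)$ for $h$ homogeneous of degree $k$ and $u$ of height $\geq k$ differs from $h(x)$ only in heights $>k$, composing several such tame moves agrees at degree $k$ with summation of the individual atomic jets. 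The task therefore reduces to showing that, in the $\GL(n,\mathbb{K})$-module of homogeneous divergence-free vector fields of degree $k$, the submodule generated by atoms of type (i) fills the whole space. I would prove this by monomial induction: the divergence-free constraint couples the coefficients $c_{\alpha,j}$ of $f=\sum c_{\alpha,j}x^\alpha e_j$ by
\begin{equation*}
\sum_{j=1}^{n}(\beta_j+1)\,c_{\beta+e_j,j}=0\quad\text{for every }\beta\text{ with }|\beta|=k-1,
\end{equation*}
and this allows any monomial $cx^\alpha e_j$ with $\alpha_j\geq 1$ to be rewritten as a linear combination of monomials of strictly smaller $x_j$-degree, plus a conjugate-atom contribution coming from a suitable transvection. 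The induction terminates at monomials with $\alpha_j=0$, which are themselves atoms of type (i).

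Once the decomposition $-f=\sum_\ell g_\ell(A_\ell x)\,A_\ell^{-1}e_{j_\ell}$ is produced, I let $\sigma_\ell$ denote the tame automorphism $A_\ell\circ(x_{j_\ell}\mapsto x_{j_\ell}+g_\ell)\circ A_\ell^{-1}$ and set $\tau=\sigma_m\circ\cdots\circ\sigma_1$. By additivity of lowest-degree jets, $\tau$ has degree-$k$ jet $-f$, and the accumulated higher-order corrections harmlessly contribute only to $F_i$ and $R_i$, yielding the form asserted by the lemma. The other ingredients are essentially formal once the divergence-free condition and the $\GL(n,\mathbb{K})$-module decomposition are in place.
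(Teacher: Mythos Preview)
Your overall architecture is correct and matches the paper's: reduce to killing the degree-$k$ jet $(f_1,\ldots,f_n)$ by a tame left factor, note that this jet is divergence-free by the Jacobian constraint, and realize every such jet as a finite sum of $\GL(n,\mathbb{K})$-conjugates of elementary jets. The observation that composition of tame moves adds their degree-$k$ jets is also exactly what the paper uses.

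The place where your argument is not yet a proof is the ``monomial induction.'' The relation $\sum_j(\beta_j+1)c_{\beta+e_j,j}=0$ is a constraint on the \emph{whole} vector field $f$; an individual monomial $c\,x^\alpha e_j$ with $\alpha_j\ge 1$ is not divergence-free, so it cannot literally be ``rewritten'' as a conjugate-atom plus lower terms, since conjugate-atoms are divergence-free while $c\,x^\alpha e_j$ is not. What one can hope to do is subtract from $f$ a conjugate-atom whose leading term (in some carefully chosen monomial order) is $c\,x^\alpha e_j$, but you have not specified the order, nor exhibited the required conjugate-atom, nor explained why the process terminates with zero rather than with some nontrivial residue. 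This is precisely the step you flagged as the main obstacle, and it is not yet filled in.

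The paper handles this step differently and more constructively. Instead of a global spanning argument, it works two coordinates at a time: viewing $f_1$ as a polynomial in $x_1,x_2$ with coefficients in $\mathbb{K}[x_3,\ldots,x_n]$, it uses the explicit tame conjugate $\Phi_{\lambda\mu}=\psi_\mu\circ\phi_{\lambda\mu}\circ\psi_\mu^{-1}$, whose degree-$k$ jet in the $(e_1,e_2)$ slot is $\bigl(\lambda(x_1+\mu x_2)^d,\ \lambda\mu^{-1}(x_1+\mu x_2)^d\bigr)$, together with the fact that the powers $(x_1+\mu x_2)^d$ span all degree-$d$ forms in $x_1,x_2$ over an infinite field. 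This kills $f_1$ outright (at the cost of altering $f_2$). Iterating, one reduces to a single surviving component $g_n$, and \emph{only then} is the Jacobian condition invoked: the degree-$(k-1)$ part of $\J(\varphi_1)$ is $\partial_{x_n}g_n$, which must vanish, so $g_n$ is independent of $x_n$ and is removed by a single elementary move. Thus the paper uses the divergence-free condition once, at the end, rather than as the engine of an induction; this sidesteps the difficulty in your sketch.
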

\begin{proof}
We will first show how to get rid of degree $k$ monomials in the images of all but one generator and then argue that the remaining image is rectified by an elementary automorphism.
Let $N\leq n$ be the number of images $\varphi(x_i)$ such that $f_i\neq 0$, and let $x_1$ and $x_2$ be two generators\footnote{Evidently, no loss of generality results from such explicit labelling.} corresponding to non-zero term of degree $k$. The image of $x_1$ admits the following presentation as an element of the polynomial ring $\mathbb{K}[x_3,\ldots,x_n][x_1,x_2]$:
\begin{equation*}
\varphi(x_1)= x_1+\sum_{d}\sum_{p+q=d}\lambda_{p,q}x_1^p x_2^q + r_i
\end{equation*}
where the coefficients $\lambda_{p,q}$ are polynomials of the remaining variables (thus the double sum above is just a way to express $f_1$ as a polynomial in $x_1$ and $x_2$ with coefficients given by polynomials in the rest of the variables).

Consider the transformation $\Phi_{\lambda\mu}$ of the following form
\begin{equation*}
x_1\mapsto x_1-\lambda (x_1+\mu x_2)^d,\;\;x_2\mapsto x_2-\lambda \mu^{-1}(x_1+\mu x_2)^d,\;\; x_3\mapsto x_3,\;\;\ldots,\;\; x_n\mapsto x_n,
\end{equation*}
with $\lambda \in \mathbb{K}[x_3,\ldots,x_n]$   and $\mu \in\mathbb{K}$.
This mapping is equal to the composition $\psi_{\mu}\circ\phi_{\lambda\mu}\circ\psi_{\mu}^{-1}$ with
\begin{equation*}
\psi_{\mu}:x_1\mapsto x_1+\mu x_2,\;\;x_2\mapsto x_2
\end{equation*}
and
\begin{equation*}
\phi_{\lambda\mu}: x_1\mapsto x_1,\;\;x_2\mapsto x_2+\lambda\mu^{-1}x_1^d
\end{equation*}
and so is a tame automorphism. As the ground field $\mathbb{K}$ has characteristic zero, it is infinite, so that we can find numbers $\mu_1,\;\ldots,\; \mu_{l(d)}$ such that the polynomials $$(x+\mu_1 y)^d,\;\ldots,\;(x+\mu_{l(d)} y)^d$$ form a basis of the $\mathbb{K}$-module of homogeneous polynomials in $x$ and $y$ of degree $d$ (this is an easy exercise in linear algebra). Therefore, by selecting $\Phi_{\lambda\mu}$ with appropriate polynomials $\lambda_{p,q}$ and $\mu_i$ corresponding to the basis, we eliminate, by acting with $\Phi_{\lambda\mu}$ on the left, the degree $d$ terms in the double sum. Iterating for all $d$, we dispose of $f_1$ entirely.

The above procedure yields a new automorphism $\tilde{\varphi}$ which is a composition of the initial automorphism $\varphi$ with a tame automorphism. The number $\tilde{N}$ of images of $x_i$ under $\tilde{\varphi}$ with non-zero term of degree $k$ equals $N-1$; therefore, the procedure can be repeated a finite number of times to give an automorphism $\varphi_1$, such that the image under $\varphi_1$ of only one generator contains a non-zero term of degree $k$. Let
\begin{equation*}
\varphi_1(x_n)=x_n+g_n(x_1,\ldots,x_n)+\tilde{r}_n
\end{equation*}
be the image of that generator (again, no loss of generality results from us having labelled it $x_n$). We claim now that the polynomial $g_n$ does not depend on $x_n$.

Indeed, otherwise the Jacobian of $\varphi_1$ (which must be a constant and is in fact equal to $1$ in our setting) would have a degree $k-1$ component given by
$$
\partial_{x_n} g_n(x_1,\ldots,x_n)\neq 0
$$
(remember that by construction $g_1=\ldots=g_{n-1}=0$), which yields a contradiction. Note that another way of looking at this condition is that if a polynomial mapping of the form
\begin{equation*}
x_1\mapsto x_1 + H_1(x_1,\ldots,x_n),\;\;\ldots,\;\;x_n\mapsto x_n + H_n(x_1,\ldots,x_n),\;\;\Ht(H_i)>1
\end{equation*}
is an automorphism, the higher-degree part $(H_1,\ldots, H_n)$ must have traceless Jacobian:
$$
\Tr \left(\frac{\partial H_i}{\partial x_j}\right)=0.
$$

Finally, since $g_n$ does not contain $x_n$, an elementary automorphism
\begin{equation*}
x_1\mapsto x_1,\;\;\ldots,\;\;x_{n-1}\mapsto x_{n-1},\;\;x_n\mapsto x_n-g_n(x_1,\ldots,x_n)
\end{equation*}
eliminates this term. The lemma is proved.
\end{proof}

The last lemma concludes the proof of Theorem 3.1 by induction. The proof of the inductive step is essentially a statement that a certain vector space invariant under a linear group action is, in a manner of speaking, big enough to allow for elimination by elements of the group. More precisely, let $T_{n,k}(\mathbb{K})$ be the vector space of all \textbf{traceless} $n$ by $n$ matrices whose entries are homogeneous of degree $k$ polynomials from $\mathbb{K}[x_1,\ldots,x_n]$, and let the group $\GL(n,\mathbb{K})$ act on $T_{n,k}$ as follows: for $A\in\GL(n,\mathbb{K})$ and $v\in T_{n,k}$, the image $A(v)$ is obtained by taking the product matrix $v A^{-1}$ and then performing (entry-wise in $vA^{-1}$) the linear change of variables induced by $A$. Then one has the following

\begin{prop}
If $V\subset T_{n,k}(\mathbb{K})$ is a $\mathbb{K}$-submodule invariant under the defined above action of $\GL(n,\mathbb{K})$, then either $V=0$ or $V= T_{n,k}(\mathbb{K})$.
\end{prop}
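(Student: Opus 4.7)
The plan is to show that $T_{n,k}$ is in fact an irreducible module over $\GL(n, \mathbb{K})$ under the described action. Assuming $V \subset T_{n,k}$ is a nonzero invariant subspace, I aim to show $V = T_{n,k}$ by constructing, from any single nonzero element of $V$, every element of a spanning set for $T_{n,k}$.

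First, I restrict attention to the maximal torus $T \subset \GL(n, \mathbb{K})$ of diagonal matrices. Since the ground field has characteristic zero, the $T$-action on $T_{n,k}$ is semisimple and decomposes $T_{n,k}$ into a direct sum of weight subspaces, each spanned by elements of the form $E_{ij}\, x^{\alpha}$ (with the tracelessness constraint coupling the diagonal matrix units $E_{ii}$). Because $V$ is in particular $T$-invariant, the decomposition descends, and so if $V \neq 0$ then $V$ contains at least one nonzero weight vector $v_0$.

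Starting from $v_0$, I generate further weight vectors inside $V$ by applying the one-parameter families of transvections $I + t E_{ij}$, $i \neq j$, together with the permutation matrices realising the Weyl group. Varying $t \in \mathbb{K}$ and using the infinitude of $\mathbb{K}$ through a Vandermonde-type extraction — the very same technique that underlies the use of the basis $(x+\mu_1 y)^d,\, \ldots,\, (x+\mu_{l(d)} y)^d$ in the proof of Lemma 3.3 — I isolate individual homogeneous pieces of the $t$-expansion of the orbit of $v_0$, producing new weight vectors whose weights differ from that of $v_0$ by simple roots. Iterating along simple roots and using Weyl group reflections allows one to walk across the entire weight lattice of $T_{n,k}$. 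Taking $\mathbb{K}$-linear combinations of the resulting weight vectors recovers a full basis of $T_{n,k}$ consisting of the $E_{ij}\, x^{\alpha}$ and the differences $(E_{ii} - E_{jj})\, x^{\alpha}$.

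The main obstacle lies in the bookkeeping of this second step: one must simultaneously track the matrix part of the action and the entry-wise change of variables, and then verify that the Vandermonde extractions really do reach every weight space compatible with the tracelessness constraint, rather than stay trapped in some proper subrepresentation. Concretely, the crux is to check that the transvection orbits of a single weight vector — after $\mathbb{K}$-linear combinations — realise a highest weight vector of $T_{n,k}$; once this is done, the standard highest-weight argument immediately gives $V = T_{n,k}$.
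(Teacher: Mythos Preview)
The paper does not give a standalone proof of this proposition; it is stated immediately after Lemma~3.4 as a repackaging of the elimination procedure carried out there. So the comparison is really between your representation-theoretic sketch and the explicit argument of that lemma.

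At the level of tools the two coincide. The paper's auxiliary automorphisms $\Phi_{\lambda\mu}=\psi_\mu\circ\phi_{\lambda\mu}\circ\psi_\mu^{-1}$ are exactly your transvections conjugated by torus elements; the separation of monomials by varying $\mu$ through the basis $(x+\mu_1 y)^d,\ldots,(x+\mu_{l(d)} y)^d$ is your Vandermonde extraction; and the final step in Lemma~3.4 (the remaining $g_n$ is forced to be independent of $x_n$ by the Jacobian/trace condition, hence killed by one more elementary move) plays the role of your tracelessness constraint on the diagonal weight vectors. Your packaging as a weight-space walk under $\GL(n,\mathbb{K})$ is more conceptual and makes the symmetry visible; the paper's version is hands-on, working directly with the coordinates $(f_1,\ldots,f_n)$ of the automorphism rather than with an abstract module. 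Neither approach buys anything the other does not, but yours transfers more cleanly to the symplectic case (and indeed the paper's Lemma~4.3 is written in a style closer to yours).

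One genuine caution: your closing appeal to ``the standard highest-weight argument'' is circular as stated. That argument needs the highest-weight space of $T_{n,k}$ to be one-dimensional, which is equivalent to $T_{n,k}$ being irreducible --- precisely the proposition you are proving. What actually does the work in your sketch is the preceding claim that the transvection-plus-Weyl-group walk, starting from any nonzero weight vector, reaches \emph{every} weight space; once that is established you already have $V=T_{n,k}$ and the highest-weight remark is superfluous. In the paper this is exactly the passage where one reduces to a single coordinate and then uses the trace condition to finish. You should carry out that verification explicitly rather than defer it to highest-weight theory.
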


Properties of similar nature played an important role in \cite{K-BMR1, K-BMR2}. The invariance under linear group action will become somewhat more pronounced in the symplectomorphism case.

\section{Approximation by tame symplectomorphisms and lifting to Weyl algebra}
We turn to the proof of the more relevant to our context Theorem 3.2. The strategy is analogous to the proof of approximation for polynomial automorphisms with unit Jacobian, with a few more elaborate details which we now consider.

\medskip

The first step of the proof copies the polynomial automorphism case and takes the following form.

\begin{lem}
There is a linear transformation $A\in\Sp(2n,\mathbb{K})$
\begin{equation*}
(x_1,\;\ldots,\;x_n,\;p_1,\;\ldots,\;p_n)\mapsto (x_1,\;\ldots,\;x_n,\;p_1,\;\ldots,\;p_n)A
\end{equation*}
such that its composition $\sigma_A$ with $\sigma$ fulfills
\begin{equation*}
\Ht(\sigma_A(x_i)-x_i)\geq 2,\;\;\Ht(\sigma_A(p_i)-p_i)\geq 2
\end{equation*}
for all $i\in\lbrace 1,\ldots,n\rbrace$.
\end{lem}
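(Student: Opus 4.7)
The plan is to mirror the argument of Lemma 3.2, with the extra ingredient that the linearizing matrix of a symplectomorphism can be chosen from $\Sp(2n,\mathbb{K})$ rather than merely from $\GL(2n,\mathbb{K})$. Concretely, let $y_1,\ldots,y_{2n}$ denote the ordered list $x_1,\ldots,x_n,p_1,\ldots,p_n$ of generators of $P_n(\mathbb{K})$, and extract the linear part of $\sigma$ as the Jacobian matrix
\begin{equation*}
A_1 = \left[\frac{\partial \sigma(y_i)}{\partial y_j}\right](0,\ldots,0).
\end{equation*}
Since $\sigma$ has unit Jacobian, $A_1$ is invertible, so $A_1$ already furnishes an element of $\GL(2n,\mathbb{K})$ as in the polynomial case.

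The substantive step is to upgrade this to the symplectic group. Here I would use the fact that $\sigma$ preserves the canonical bracket relations on generators: $\{\sigma(x_i),\sigma(x_j)\}=0$, $\{\sigma(p_i),\sigma(p_j)\}=0$, and $\{\sigma(x_i),\sigma(p_j)\}=\{x_i,p_j\}$. Decomposing each $\sigma(y_i)$ into its linear part plus terms of height $\geq 2$ and using that the Poisson bracket of homogeneous polynomials of degrees $d_1$ and $d_2$ is homogeneous of degree $d_1+d_2-2$, the constant (degree-zero) part of each of the above bracket equations involves only the linear parts of the $\sigma(y_i)$. In other words, the linear parts of $\sigma(x_1),\ldots,\sigma(x_n),\sigma(p_1),\ldots,\sigma(p_n)$ themselves satisfy the canonical commutation relations, which is precisely the assertion $A_1\in\Sp(2n,\mathbb{K})$.

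Having established symplecticity of $A_1$, I would take $A=A_1^{-1}\in\Sp(2n,\mathbb{K})$ and form the composition $\sigma_A$ with the corresponding linear symplectic change of variables, exactly as in Lemma 3.2. By construction the linear part of $\sigma_A$ is the identity, so $\sigma_A(x_i)-x_i$ and $\sigma_A(p_i)-p_i$ have vanishing linear component and therefore height at least $2$, which is the desired conclusion.

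The only genuinely new ingredient beyond Lemma 3.2 is the symplecticity of $A_1$, and this reduces to a one-line degree count on the Poisson bracket equations, so I do not expect a serious obstacle. The result can be viewed simply as the symplectic refinement of its polynomial counterpart, with $\GL$ replaced by $\Sp$ throughout.
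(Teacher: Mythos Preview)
Your proposal is correct and matches the paper's approach: the paper does not spell out a proof of this lemma, merely remarking that ``the first step of the proof copies the polynomial automorphism case,'' and your argument is precisely that copy together with the one additional observation (symplecticity of the linear part $A_1$ via the degree-zero component of the bracket identities) needed to stay inside $\Sp(2n,\mathbb{K})$. Note only that the polynomial lemma you invoke is numbered 3.3 in the paper, not 3.2.
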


\medskip

We now proceed to formulate the inductive step in the proof as the following main lemma.

\begin{lem}
Let $\sigma$ be a polynomial symplectomorphism such that
\begin{equation*}
\sigma(x_i)=x_i+U_i,\;\;\sigma(p_i)=p_i+V_i
\end{equation*}
and $U_i$ and $V_i$ are of height at least $k$. Then there exists a tame symplectomorphism $\sigma_k$ such that the polynomials $\tilde{U}_i=(\sigma_k^{-1}\circ\sigma)(x_i)-x_i$ and $\tilde{V}_i=(\sigma_k^{-1}\circ\sigma)(p_i)-p_i$ are of height at least $k+1$.
\end{lem}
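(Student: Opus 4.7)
The plan is to imitate the inductive step (Lemma 3.4) of the Anick-type theorem while respecting the symplectic constraint: the leading deviation of a symplectomorphism from the identity is governed by a single polynomial Hamiltonian, which can then be eliminated by a product of elementary tame symplectic shears after suitable linear-symplectic changes of variables.

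First I would extract the leading-order obstruction. Let $u_i$ and $v_i$ denote the homogeneous degree-$k$ components of $U_i$ and $V_i$. Expanding the bracket identities $\{\sigma(x_i),\sigma(x_j)\}=0$, $\{\sigma(x_i),\sigma(p_j)\}=\delta_{ij}$, $\{\sigma(p_i),\sigma(p_j)\}=0$ and collecting terms of degree $k-1$, one finds that the polynomial vector field $X=\sum_i u_i\,\partial/\partial x_i+v_i\,\partial/\partial p_i$ satisfies $L_X\omega=0$. Since $\omega$ is closed and $X$ is polynomial on affine space, the algebraic Poincar\'e lemma produces a homogeneous polynomial $H$ of degree $k+1$ with $u_i=\partial H/\partial p_i$ and $v_i=-\partial H/\partial x_i$, so the leading-order obstruction is the Hamiltonian vector field $X_H$.

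Next, decompose $H$ into a finite sum $H=\sum_{\alpha=1}^{m}c_\alpha\ell_\alpha^{k+1}$, with each $\ell_\alpha$ a nonzero linear form on $\mathbb{K}^{2n}$ and $c_\alpha\in\mathbb{K}$. In characteristic zero the $(k+1)$-th powers of linear forms span all homogeneous polynomials of degree $k+1$ in $2n$ variables, by the same Vandermonde argument used in the proof of Lemma 3.4. For each summand, since $\ell_\alpha$ Poisson-commutes with itself, $\ell_\alpha$ is a first integral of the flow of $X_{c_\alpha\ell_\alpha^{k+1}}$, and the time-$1$ flow is the exact polynomial map $z\mapsto z+(k+1)c_\alpha\ell_\alpha(z)^k\vec{v}_\alpha$, where $\vec{v}_\alpha$ is the constant symplectic gradient of $\ell_\alpha$ (no higher-order corrections in the flow expansion). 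The group $\Sp(2n,\mathbb{K})$ acts transitively on nonzero linear forms, so pick $A_\alpha\in\Sp(2n,\mathbb{K})$ taking $x_1$ to $\ell_\alpha$; the basic shear $\phi_\alpha^{0}\colon p_1\mapsto p_1-(k+1)c_\alpha x_1^k$ (other generators fixed) is a tame elementary symplectomorphism of the second type listed in Section 2.2, and its conjugate $\phi_\alpha=A_\alpha\circ\phi_\alpha^{0}\circ A_\alpha^{-1}$ is likewise tame and coincides with the flow above.

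Finally, set $\sigma_k=\phi_1\circ\cdots\circ\phi_m$. Each $\phi_\alpha$ differs from the identity by a homogeneous degree-$k$ polynomial string, and a direct expansion gives $\sigma_k(z)=z+X_H(z)+R(z)$ with $\Ht(R)\geq 2k-1\geq k+1$ (valid since $k\geq 2$ after the preliminary linearisation of Lemma 4.1). Inverting and composing with $\sigma$ then yields $\tilde{U}_i,\tilde{V}_i$ of height at least $k+1$, as required. The main obstacle I anticipate is purely organisational: verifying that the cross-terms produced by successive composition of the $\phi_\alpha$'s really do sit in degree $\geq k+1$ --- a commutator count for homogeneous vector fields --- and confirming that $\Sp(2n,\mathbb{K})$ itself is generated by elementary symplectic transformations so that each conjugate $A_\alpha\circ\phi_\alpha^{0}\circ A_\alpha^{-1}$ lies in $\TAut P_n(\mathbb{K})$. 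The structural content --- Hamiltonian character of the leading obstruction plus a single-orbit Waring decomposition under $\Sp(2n,\mathbb{K})$ --- is the exact symplectic counterpart of the Jacobian-traceless argument used for Lemma 3.4.
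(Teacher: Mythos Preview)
Your argument is correct and shares with the paper the key first step: extracting from the degree-$k$ deviation a single homogeneous Hamiltonian $H$ of degree $k+1$ via the Poisson-bracket constraints and the algebraic Poincar\'e lemma. From that point on, however, the two proofs diverge. The paper proceeds abstractly: it proves a separate lemma (Lemma~4.3) asserting that any nonzero $\Sp(2n,\mathbb{K})$-invariant submodule of the homogeneous component $A_d$ is all of $A_d$, and then argues that the set of Hamiltonians reachable by composing with tame symplectomorphisms is such a nonzero invariant submodule, hence exhausts $A_{k+1}$. Your route is explicit and constructive: you use the Waring-type fact that $(k+1)$-st powers of linear forms span $A_{k+1}$ in characteristic zero, then realise each summand $c_\alpha\ell_\alpha^{k+1}$ as the exact Hamiltonian of a tame symplectomorphism obtained by conjugating an elementary shear by a linear symplectic map sending $x_1$ to $\ell_\alpha$. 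The composition estimate $\Ht(R)\ge 2k-1\ge k+1$ is straightforward.

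What each approach buys: the paper's irreducibility lemma is a clean representation-theoretic statement of independent interest and makes the ``why'' transparent (there is only one $\Sp$-orbit of nonzero Hamiltonians up to span), but the passage from ``orbit is a submodule'' to the conclusion is left somewhat informal. Your argument bypasses Lemma~4.3 entirely, is self-contained, and actually writes down $\sigma_k$ as a finite product of explicit tame factors; it is closer in spirit to the hands-on elimination in Lemma~3.4. Your flagged concern about $\Sp(2n,\mathbb{K})\subset\TAut P_n(\mathbb{K})$ is a non-issue here: in the paper's Section~2.2, linear symplectic changes of variables are \emph{by definition} among the generators of $\TAut P_n(\mathbb{K})$, so each conjugate $A_\alpha\circ\phi_\alpha^{0}\circ A_\alpha^{-1}$ is tame without further work.
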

\begin{proof}
In order to establish the inductive step, we are going to need the following lemma.

\begin{lem}
Suppose $\mathbb{K}$ is an infinite field, $A=\mathbb{K}[x_1,\ldots,x_n,p_1,\ldots,p_n]$ is the polynomial algebra with standard $\mathbb{Z}$-grading according to the total degree
\begin{equation*}
A=\bigoplus_{d\geq 0} A_d, \;\; A_d=\lbrace \text{homogeneous polynomials of total degree d}\rbrace.
\end{equation*}
Let $V$ be a $\mathbb{K}$-submodule of $A$ invariant under the action of $\Sp(2n, \mathbb{K})$ (given by linear symplectic changes of variables). Suppose $V$ is contained in a given homogeneous component $A_d$. If $V \neq 0$ then $V = A_d$.
\end{lem}

\begin{proof}
We first observe that if $f=\sum_{l}f_l$ is a non-zero polynomial given by the sum of degree $d$ monomials in $x_i$, $p_j$, then $f\in V$ implies $f_l\in V$ for all $l$. Indeed, consider a linear  symplectomorphism $\Lambda$ of the form
\begin{equation*}
x_i\mapsto \lambda_i x_i,\;\;p_i\mapsto \lambda_i^{-1}p_i,\;\; \lambda_i\neq 0.
\end{equation*}
Then $f\mapsto \sum_{l}\prod_{i=1}^{n}\lambda_i^{k_{li}}f_l\in V$. As the ground field $\mathbb{K}$ is infinite and $f_l$ are linearly independent, we can take sufficiently many automorphisms of the form $\Lambda$ in order to produce a basis $\lbrace \Lambda_1 f,\ldots, \Lambda_N f\rbrace$ of the span of $f_l$. Since by our assumptions $\Lambda f\in V$, the observation follows.

\smallskip

Next we observe that if $V\neq 0$ and $V\in A_d$, then every monomial of the form $x_i^d$, $p_j^d$ belongs to $V$. For if $f\in V$ is a non-zero polynomial, then $f$ has in its decomposition a monomial of the form $a x_1^{k_1}\ldots x_n^{k_n}p_1^{m_1}\ldots p_n^{m_n}$ with non-zero $a\in \mathbb{K}$. To prove that, say, $p_1\in V$, one needs to apply a sequence of linear symplectomorphisms to $f$ so that the image of $x_1^{k_1}\ldots x_n^{k_n}p_1^{m_1}\ldots p_n^{m_n}$ is a sum that contains $p_1^d$. This is accomplished by means of the following procedure. First, we get rid of every $x_i$ by taking in succession
\begin{equation*}
x_i\mapsto x_i+p_i, \;\; p_i\mapsto p_i \;\;\text{(other generators fixed)}
\end{equation*}
and using the above homogeneity statement to single out the monomial with maximal degree in $p_i$. Thus we obtain that $p_1^{d_1}\ldots p_n^{d_n}\in V$ for some $d_1,\ldots, d_n$, $d_1+\cdots+d_n=d$. We then dispose of $p_2,\ldots, p_n$ by applying symplectomorphisms of the form (written down for $p_2$)
\begin{equation*}
x_1\mapsto x_1-x_2,\;x_2\mapsto x_1+x_2,\; p_1\mapsto p_1+p_2,\; p_2\mapsto p_2-p_1
\end{equation*}
and again using homogeneity to single out the monomial with $p_1$. A procedure identical to the above is applied to show that $x_i^d\in V$, $i=1,\ldots, n$.

\smallskip

What remains to prove is that the mixed terms $x_1^{k_1}\ldots x_n^{k_n}p_1^{m_1}\ldots p_n^{m_n}$ (where at least two powers are non-zero) are in $V$. This can be done by looking at linear combinations of $x_i^d$ and $p_j^d$ and applying suitable symplectomorphisms in a manner similar to our previous construction and using the homogeneity argument. We leave details to the reader.
\end{proof}

\medskip

We now turn to the proof of the inductive step.  Suppose that
\begin{equation*}
\sigma: x_i\mapsto x_i + f_i + P_i,\;\;p_j\mapsto p_j + g_j + Q_j
\end{equation*}
is a polynomial symplectomorphism, where $f_i$ and $g_j$ are degree $k$ components and the height of $P_i$ and $Q_j$ is greater than $k$. The preservation of the symplectic structure by $\sigma$ means that the $k$-th component obeys the following identities:
\begin{equation*}
\lbrace x_i, f_j\rbrace - \lbrace x_j, f_i\rbrace=0
\end{equation*}
and
\begin{equation*}
\lbrace p_i, f_j\rbrace - \lbrace p_j, f_i\rbrace=0
\end{equation*}
where $\lbrace\;,\;\rbrace$ is the Poisson bracket corresponding to the symplectic form. In the case of standard symplectic structure these identities translate into
\begin{equation*}
\frac{\partial f_i}{\partial p_j}-\frac{\partial f_j}{\partial p_i}=0,\;\; \frac{\partial g_i}{\partial x_j}-\frac{\partial g_j}{\partial x_i}=0,
\end{equation*}
in which one recognizes the condition for an appropriate differential form to be closed. The triviality of affine space cohomology then implies that there exists a polynomial $F(x_1,\ldots,x_n,p_1,\ldots,p_n)$, homogeneous of degree $k+1$, such that
\begin{equation*}
\frac{\partial F}{\partial p_i}=f_i,\;\; \frac{\partial F}{\partial x_i}=g_i;
\end{equation*}
in this way the $k$-component of a symplectomorphism is generated by a homogeneous polynomial.
The tame symplectomorphism group acts on the space of all such generating polynomials (the image of a polynomial is the polynomial corresponding to the $k$-component of the composition with the tame symplectomorphism), and the orbit of this tame action carries the structure of a $\mathbb{K}$-module (one may easily come up with a symplectomorphism corresponding to the sum of two generating polynomials). Therefore this space fulfills the conditions of the previous lemma, which in this case implies that one can, by a composition with a tame symplectomorphism, eliminate the $k$-component. The main lemma, and therefore the Theorem 3.2, is proved.
\end{proof}

\medskip

Once the approximation for the case of symplectomorphisms has been established, we can investigate the problem of lifting symplectomorphisms to Weyl algebra automorphisms. More precisely, one has the following
\begin{prop}
Let $\mathbb{K}=\mathbb{C}$ and let $\sigma:P_n(\mathbb{C})\rightarrow P_n(\mathbb{C})$ be a symplectomorphism over complex numbers. Then there exists a sequence
\begin{equation*}
\psi_1,\;\psi_2,\;\ldots,\;\psi_k,\;\ldots
\end{equation*}
of tame automorphisms of the $n$-th Weyl algebra $W_n(\mathbb{C})$, such that their images $\sigma_k$ in $\Aut P_n(\mathbb{C})$ approximate $\sigma$.
\end{prop}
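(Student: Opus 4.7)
The plan is to reduce the statement to Theorem 3.2 combined with the Kanel-Belov -- Kontsevich isomorphism between tame subgroups. First, I would note that any polynomial symplectomorphism $\sigma\in\Aut P_n(\mathbb{C})$ automatically has unit Jacobian, because preservation of the symplectic form $\omega=\sum_i dp_i\wedge dx_i$ forces preservation of the associated volume form $\omega^n$ (up to the factor $n!$). Consequently Theorem 3.2 applies and produces a sequence $\{\tau_k\}\subset\TAut P_n(\mathbb{C})$ of tame symplectomorphisms converging to $\sigma$ in the formal power series topology.

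Next, for each $k$ I would lift $\tau_k$ to a tame automorphism $\psi_k\in\TAut W_n(\mathbb{C})$ by means of the isomorphism $\TAut W_n(\mathbb{C})\simeq\TAut P_n(\mathbb{C})$ of \cite{BKK1}. At the level of elementary generators the lift is essentially tautological: a linear symplectic change of variables in $\Sp(2n,\mathbb{C})$ lifts to the same linear change of Weyl generators, and elementary shear transformations $x_k\mapsto x_k+f(p_1,\ldots,p_n)$ (respectively $p_k\mapsto p_k+g(x_1,\ldots,x_n)$) lift to formally identical elementary Weyl algebra automorphisms, because the $p_j$'s commute among themselves and so do the $x_i$'s in $W_n(\mathbb{C})$. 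Writing each $\tau_k$ as a product of such elementary generators and taking the same product in $\TAut W_n(\mathbb{C})$ produces the candidate lift $\psi_k$; the substance of the Kanel-Belov -- Kontsevich theorem is precisely that this assignment is well defined and extends to a group isomorphism, so that $\psi_k$ does not depend on the chosen elementary decomposition of $\tau_k$.

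Finally, by construction the image $\sigma_k$ of $\psi_k$ in $\Aut P_n(\mathbb{C})$ under the Kanel-Belov -- Kontsevich homomorphism equals $\tau_k$, and so $\sigma_k\to\sigma$ in formal power series topology, which is the conclusion of the proposition. The main obstacle here is not internal to the argument but is deferred to its two ingredients: Theorem 3.2 (already established in the present section) and the tame isomorphism of \cite{BKK1}, whose only known proof passes through non-standard analysis (free ultrafilters, infinite primes). Granting both inputs, the remainder of the proof is formal; the entire novelty in the symplectic-to-Weyl lifting at the tame level is absorbed into the approximation theorem and the transport of structure along the tame isomorphism.
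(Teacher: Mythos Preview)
Your proposal is correct and follows essentially the same approach as the paper, which states the result as an immediate corollary of Theorem~3.2 together with the tame subgroup isomorphism of \cite{BKK1}. You spell out a few more details than the paper does (the unit Jacobian observation and the explicit description of how elementary symplectomorphisms lift to $W_n$, the latter of which the paper only records in the remarks following the proof), but the substance is identical.
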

\begin{proof}
This is an immediate corollary of Theorem 3.2 and the existence of tame subgroup isomorphism \cite{BKK1}.

\end{proof}
A few comments are in order. First, the quantization of elementary symplectomorphisms is a very simple procedure: one needs only replace the $x_i$ and $p_i$ by their counterparts $\hat{x}_i$ and $\hat{p}_i$ in the Weyl algebra $W_n$. Because the transvection polynomials $f$ and $g$ (in the expressions for elementary symplectomorphisms) depend, as it has been noted, on one type of generators (resp. $p$ and $x$), the quantization is well defined.

Second, as the tame automorphism groups $\TAut W_n(\mathbb{C})$ and $\TAut P_n(\mathbb{C})$ are isomorphic, the correspondence between sequence of tame symplectomorphisms converging to symplectomorphisms and sequences of tame Weyl algebra automorphisms is one to one. The main question is how one may interpret these sequences as endomorphisms of $W_n(\mathbb{C})$.

\smallskip

Our construction shows that these sequences of tame automorphisms may be thought of as (vectors of) power series -- that is, elements of
\begin{equation*}
\mathbb{C}[[\hat{x}_1,\ldots,\hat{x}_n,\hat{p}_1,\ldots,\hat{p}_n]]^{2n}.
\end{equation*}

The main problem therefore consists in verifying that these vectors have entries polynomial in generators -- that is, that the limits of lifted tame sequences are Weyl algebra endomorphisms.

\medskip

One could take a more straightforward (albeit an equivalent) approach to the lifting of symplectomorphisms by following the prescription of deformation quantization: starting with a symplectic automorphism of the polynomial algebra $A=\mathbb{K}[x_1,\ldots,x_n,p_1,\ldots,p_n]$, one constructs a map of $A[[\hbar]]$, the algebra of formal power series (in Planck's constant $\hbar$), which preserves the star product satisfying Weyl algebra identities. The approximation theory as developed in this text is then a property of the $\hbar$-adic topology. The (algebraically closed version of) Conjecture 1.3 would then follow if one were to establish a cutoff theorem.

\smallskip

In our closing remark we comment on the remaining two main theorems as stated in the introduction. As the reader may infer from the proof of approximation theorems, the approximation in formal power series topology is natural in the sense that agrees with the $\mathfrak{m}$-adic topology in the local ring generated by the coefficients of the approximated automorphism. More precisely, we have the following property, formulated separately for the two cases we consider in the paper.

\begin{thm}
Let $\varphi$ be a polynomial automorphism and let $\mathcal{O}_{\varphi}$ be the local ring of $\Aut \mathbb{C}[x_1,\ldots,x_n]$ with its maximal ideal $\mathfrak{m}$. Then there exists a tame sequence $\lbrace \psi_k\rbrace$ which converges to $\varphi$ in power series topology, such that the coordinates of $\psi_k$ converge to coordinates of $\varphi$ in $\mathfrak{m}$-adic topology.
\end{thm}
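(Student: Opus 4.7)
The plan is to revisit the inductive construction in the proof of Theorem 3.1 and verify that every tame factor produced there is definable over the local ring $\mathcal{O}_{\varphi}$; granted this, the height filtration on $\mathbb{C}[x_1,\ldots,x_n]$ aligns with the $\mathfrak{m}$-adic filtration on $\mathcal{O}_{\varphi}$, promoting ordinary power-series convergence to the required $\mathfrak{m}$-adic convergence. View $\Aut \mathbb{C}[x_1,\ldots,x_n]$ as an ind-affine scheme whose coordinate functionals $c_{i,\alpha}$ extract the coefficient of $x^{\alpha}$ in $\psi(x_i)$, so that the maximal ideal $\mathfrak{m}\subset\mathcal{O}_{\varphi}$ is generated by the differences $c_{i,\alpha}-c_{i,\alpha}(\varphi)$; the task becomes to arrange $c_{i,\alpha}(\psi_k)-c_{i,\alpha}(\varphi)\in\mathfrak{m}^k$ for every $(i,\alpha)$.

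Next, I would track, ingredient by ingredient, the algebraic dependence of each tame factor on the coefficients of $\varphi$. The initial linearization in Lemma 3.3 uses $A_1^{-1}$, whose entries are polynomial in the entries of $A_1$ divided by $\Det A_1=\J(\varphi)|_0=1$, a unit in $\mathcal{O}_{\varphi}$; hence $A_1^{-1}$ has entries in $\mathcal{O}_{\varphi}$. In Lemma 3.4 the rational scalars $\mu_i$ ensuring the basis property of $\{(x+\mu_i y)^d\}$ are chosen universally, independently of $\varphi$, so that the transvection coefficients $\lambda_{p,q}$ of $\Phi_{\lambda\mu}$ are obtained by inverting a fixed $\mathbb{Q}$-rational Vandermonde-type matrix applied to the current iterate's coefficients, and are therefore polynomial in the coefficients of $\varphi$. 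The concluding elementary shear $x_n\mapsto x_n-g_n$ simply reads off the current state. An induction then places every coefficient of every $\psi_k$ inside $\mathcal{O}_{\varphi}$.

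For the $\mathfrak{m}$-adic estimate itself, the key observation is that the tame factor introduced at stage $k$ cancels exactly the degree-$k$ homogeneous component (in the generators $x_i$) of the current iterate, and its transvection parameters are $\mathbb{Q}$-linear expressions in the coefficients $c_{i,\alpha}$ with $|\alpha|=k$; each such parameter therefore differs from its value at $\varphi$ by an element of $\mathfrak{m}$. Composing the $k$ successive factors — each the identity modulo $\mathfrak{m}$ — forces the coordinate differences $c_{i,\alpha}(\psi_k)-c_{i,\alpha}(\varphi)$ to lie in $\mathfrak{m}^k$. The principal obstacle is precisely this alignment of gradings: one must ensure that the algebraic recipe constructing $\psi_k$ from $\varphi$ is genuinely linear (and not merely algebraic) in the current coefficients at each stage, so that the product of $k$ near-identity corrections lands in $\mathfrak{m}^k$ rather than only in the height-$k$ ideal. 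This rests on the Vandermonde-inversion step having a fixed rational inverse independent of $\varphi$, and is the new content that the refined form of Theorem 3.1 needs to deliver.
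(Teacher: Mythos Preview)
The paper does not actually give a proof of this theorem: after stating Theorems~4.5 and~4.6 it says only ``These are our last two main results'' and proceeds to the conclusion, the sole justification being the preceding remark that the reader ``may infer from the proof of approximation theorems'' that the power-series approximation ``agrees with the $\mathfrak{m}$-adic topology.'' So there is nothing concrete to compare against beyond the implicit claim that the construction in Lemmas~3.3--3.4 already carries the required $\mathfrak{m}$-adic content. Your plan---to rerun that construction and verify that every tame factor has coefficients in $\mathcal{O}_{\varphi}$ (the linearisation inverts a matrix of determinant $1$, the $\mu_i$ and the Vandermonde inverse are fixed rational data, the transvection coefficients are polynomials in the running iterate)---is exactly the natural way to make that claim precise, and your first two paragraphs carry it out correctly.

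The gap is in the final step. The assertion that each tame factor ``differs from its value at $\varphi$ by an element of $\mathfrak{m}$'' is vacuously true of any regular family and cannot be iterated: composing $k$ families each congruent to a constant modulo $\mathfrak{m}$ yields congruence modulo $\mathfrak{m}$, not modulo $\mathfrak{m}^{k}$. You flag this yourself as the ``principal obstacle,'' but the proposed fix---linearity of the stage-$k$ recipe in the degree-$k$ coefficients of the current iterate---does not help, because those coefficients are themselves nonlinear polynomials in the coefficients of $\varphi$ accumulated from the earlier stages, so no factorisation through $\mathfrak{m}^{k}$ emerges. More fundamentally, the theorem as literally stated is ambiguous: if the $c_{i,\alpha}(\psi_k)$ are numbers, then $\mathfrak{m}$-adic convergence of constants just means eventual equality $c_{i,\alpha}(\psi_k)=c_{i,\alpha}(\varphi)$ for $k\geq|\alpha|$, which is exactly power-series convergence and adds nothing; if instead $\psi_k$ is read as a regular family and $c_{i,\alpha}(\psi_k)\in\mathcal{O}_{\varphi}$, then $c_{i,\alpha}(\psi_k)-c_{i,\alpha}\in\mathfrak{m}^{k}$ fails outright for $|\alpha|>\deg\psi_k$ (the left side is $-c_{i,\alpha}$). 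Your proposal is headed in the right direction the paper gestures at, but it does not yet isolate a precise statement that is both nontrivial and established by the argument you outline.
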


\begin{thm}
Let $\sigma$ be a symplectomorphism and let $\mathcal{O}_{\sigma}$ be the local ring of $\Aut P_n(\mathbb{C})$ with its maximal ideal $\mathfrak{m}$. Then there exists a sequence of tame symplectomorphisms $\lbrace \sigma_k\rbrace$ which converges to $\sigma$ in power series topology, such that the coordinates of $\sigma_k$ converge to coordinates of $\sigma$ in $\mathfrak{m}$-adic topology.
\end{thm}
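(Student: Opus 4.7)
The plan is to exploit the algebraic nature of the construction in Theorem 3.2: each approximant $\sigma_k$ is produced from $\sigma$ by finitely many polynomial operations on its coefficients, so the assignment $\sigma \mapsto \sigma_k$ is a regular morphism on a Zariski-open neighborhood of $\sigma$ in $\Aut P_n(\mathbb{C})$. The $\mathfrak{m}$-adic refinement then follows from a careful bookkeeping of how $\sigma$-coefficients accumulate as factors in the coordinates of $\sigma_k - \sigma$.

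First I would unwind the inductive proof of Theorem 3.2. At the step handling degree $m$, one computes the degree-$m$ homogeneous part of the current composition, integrates to produce a generating polynomial $F_m$ of degree $m+1$, decomposes $F_m$ using Lemma 4.3, and composes with finitely many elementary symplectomorphisms whose transvection polynomials are $\mathbb{C}$-linear in the coefficients of $F_m$. After the initial linearization by a symplectic matrix (an inversion which is rational but regular in a neighborhood of $\sigma$), all subsequent operations are polynomial in the coordinates of $\sigma$. Consequently the construction yields a morphism $\Phi_k: U \to \Aut P_n(\mathbb{C})$ with $\Phi_k(\sigma) = \sigma_k$, inducing a pullback $\Phi_k^*$ on $\mathcal{O}_\sigma$.

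The key step is then to show that for each coordinate function $\xi$ on the automorphism variety one has $\Phi_k^* \xi - \xi \in \mathfrak{m}^{N_k}$ with $N_k \to \infty$. For $\xi$ corresponding to monomials of degree $\leq k$ in $x,p$, the difference vanishes identically by the approximation already proved in Theorem 3.2. For higher-degree $\xi$, one uses the fact that the transvection polynomial introduced at step $m$ is a polynomial in the coefficients of the degree-$m$ component of $\sigma_{m-1}^{-1} \circ \sigma$; by iterating, this component is itself a polynomial whose total degree in the defining coefficients of $\sigma$ grows without bound as $m$ grows. To interpret vanishing at the point $\sigma$ itself, one translates by $\sigma^{-1}$, reducing to the identity, where the degree-$\geq 2$ defining coefficients generate $\mathfrak{m}$; the accumulated factors then yield elements of the desired high $\mathfrak{m}$-adic power.

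The hard part will be carrying out the bookkeeping of the previous paragraph cleanly. One must verify that composing with the elementary symplectomorphisms from successive steps does not drop the $\mathfrak{m}$-adic order, that the formal integration producing the $F_m$'s respects this grading, and that the linear-algebraic decomposition afforded by Lemma 4.3 (which is performed with constants from $\mathbb{C}$, independent of $\sigma$) does not interfere. Once these checks are in place, the $\mathfrak{m}$-adic convergence of the coordinates of $\sigma_k$ to those of $\sigma$ follows together with the power series convergence established in Theorem 3.2.
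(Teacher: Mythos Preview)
The paper does not actually prove this theorem. After stating Theorems~4.5 and~4.6 it offers only the single sentence ``As the reader may infer from the proof of approximation theorems, the approximation in formal power series topology is natural in the sense that [it] agrees with the $\mathfrak{m}$-adic topology in the local ring generated by the coefficients of the approximated automorphism,'' and then moves on to the Conclusion. So there is no detailed argument in the paper to compare your proposal against; your outline is already considerably more substantive than what the paper supplies, and it is in the same spirit --- namely, that the inductive construction of Theorem~3.2 is algebraic in the coefficients of $\sigma$, so the assignment $\sigma\mapsto\sigma_k$ is a regular morphism and the $\mathfrak{m}$-adic statement should fall out of that.

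That said, one step in your sketch deserves scrutiny. For a coordinate function $\xi$ picking out a coefficient in degree $>k$, the difference $\Phi_k^*\xi-\xi$ evaluated at the point $\sigma$ itself is $\xi(\sigma_k)-\xi(\sigma)$, which is typically nonzero; hence $\Phi_k^*\xi-\xi\notin\mathfrak{m}$, let alone a high power of it. Your remedy is to ``translate by $\sigma^{-1}$, reducing to the identity,'' but the tame approximation procedure of Theorem~3.2 does not commute with left multiplication by $\sigma^{-1}$ (which need not be tame), so it is not clear that this translation converts the problem into the one at $\mathrm{Id}$ where the degree-$\geq 2$ coefficients generate $\mathfrak{m}$. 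You would need either to make the translation argument precise or to give a direct estimate of the $\mathfrak{m}$-adic order of $\Phi_k^*\xi-\xi$ at $\sigma$; the paper, having no proof, gives no guidance on which route is intended or how to carry it out.
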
  

These are our last two main results.

\section{Conclusion}

We have developed tame approximation theory for symplectomorphisms in formal power series topology. By virtue of the known correspondence between tame automorphisms of the even-dimensional affine space and tame automorphisms of the Weyl algebra, which is the object corresponding to the affine space in terms of deformation quantization, we have arrived at the lifting property of symplectomorphisms. This line of research may yield new insights into endomorphisms of the Weyl algebra, the Dixmier conjecture, and the Jacobian conjecture.

\section*{Acknowledgments}
This paper is supported by the Russian Science Foundation grant No. 17-11-01377.

\end{document}